\theoremstyle{plain}
\newtheorem{theorem}{Theorem}[section]
\newtheorem*{theorem*}{Theorem}
\newtheorem{proposition}[theorem]{Proposition}
\newtheorem{lemma}[theorem]{Lemma}
\newtheorem{corollary}[theorem]{Corollary}
\theoremstyle{definition}
\newtheorem{definition}[theorem]{Definition}
\newtheorem*{definition*}{Definition}
\newtheorem{remark}[theorem]{Remark}
\newtheorem{example}[theorem]{Example}
\newtheorem{conjecture}[theorem]{Conjecture}
\numberwithin{equation}{section}
\DeclareRobustCommand*\cal{\relax\mathcal}
\def\n{\noindent}
\def\de{\displaystyle}
\def\le{\leq}
\def\ge{\geq}
\def\B{\mathbb B}
\def\BigC{\mathbb C}
\def\BigN{\mathbb N}
\def\BigD{\mathbb D}
\def\BigB{\mathbb B}
\def\C{\BigC}
\def\bd S{\cal C}
\def\a{\alpha}
\def\b{\beta}
\def\f{\varphi}
\def\g{\gamma}
\def\l{\lambda}
\def\m{\mu}
\def\n{\nu}
\def\r{\rho}
\def\s{\sigma}
\def\t{\tau}
\def\z{\zeta}
\def\G{\Gamma}
\def\>{\geq}
\def\Aut{\operatorname{Aut}}
\def\bd{\partial}
\newcommand{\norm}[1]{\left|\left|#1\right|\right|}
\newcommand{\modu}[1]{\left|#1\right|}
\title{On the Isometric Composition Operators on the Bloch Space in $\mathbb{C}^n$}
\author{Robert F.~Allen and Flavia Colonna}
\address{Department of Mathematical Sciences, George Mason University}
\email{rallen2@gmu.edu, fcolonna@gmu.edu}
\date{}
\keywords{Composition operators, Bloch space, Homogeneous domains, Isometry.}
\subjclass[2000]{primary: 30D45, 32M15, secondary: 47B38, 47A30}
\begin{document}

\begin{abstract} 	
	Let $\f$ be a holomorphic self-map of a bounded homogeneous domain $D$ in $\BigC^n$. In this work, we show that the composition operator $C_\f: f\mapsto f\circ \f$ is bounded on the Bloch space $\cal{B}$ of the domain and provide estimates on its operator norm. We also give a sufficient condition for $\f$ to induce an isometry on $\cal{B}$. This condition allows us to construct non-trivial examples of isometric composition operators in the case when $D$ has the unit disk as a factor. We then obtain some necessary conditions for $C_\f$ to be an isometry on $\cal{B}$ when $D$ is a Cartan classical domain. Finally, we give the complete description of the spectrum of the isometric composition operators in the case of the unit disk and for a wide class of symbols on the polydisk.
\end{abstract}

\maketitle

\section{Introduction}

An analytic function $f$ on $\BigD=\{z\in \BigC: |z|<1\}$ is said to be {\bf Bloch} if

\begin{equation}\b_f = \sup_{z \in \BigD}(1-|z|^2)|f'(z)| < \infty.\label{blochone}\end{equation}

The set $\cal{B}$ of Bloch functions on $\BigD$ is a Banach space with semi-norm $f\mapsto \b_f$ and norm  $\|f\|_{\cal{B}}=|f(0)|+\b_f$.

By the Schwarz-Pick lemma, if $f$ is any Bloch function on $\BigD$ and $\f$ is an analytic function mapping $\BigD$ into itself, then $f\circ \f\in\cal{B}$ and $\b_{f\circ \f}\le \b_f$, with equality holding if $\f\in{\rm{Aut}}(\BigD)$, where Aut$(D)$ denotes the set of biholomorphic transformations of a domain $D$ (which we call {\bf automorphisms} of $D$).

In \cite{X}, Xiong proved that the composition operator
$C_\f(f)=f\circ \f$ is bounded on $\cal{B}$ and gave estimates for
its operator norm. Furthermore, he obtained several necessary
conditions for $C_\f$ to be an isometry (that is, to preserve the
Bloch norm).  The function $\f$ is called the {\bf symbol} of the
operator $C_\f$.

In \cite{C}, the second author completely characterized the symbols of the isometric composition operators. In Theorem~\ref{cmain}, we list several equivalent conditions that can be used to describe the class of such symbols.

\begin{theorem} \label{cmain} Let $\f$ be an analytic function mapping $\BigD$ into itself. Then  $C_\f$ is an isometry on $\cal{B}$ if and only if $\f(0)=0$ and any of the following  equivalent conditions holds:
\begin{enumerate}
\item[{\normalfont{(a)}}] $\b_\f=1$.

\item[{\normalfont{(b)}}] $B_\f:=\displaystyle\sup_{z\in \BigD}\frac{(1-|z|^2)|\f'(z)|}{1-|\f(z)|^2}=1$.

\item[{\normalfont{(c)}}] Either $\f\in\hbox{Aut}(\BigD)$ or for every $a\in \BigD$ there
exists a sequence $\{z_k\}$ in $\BigD$ such that $|z_k|\to 1$,
$\f(z_k)= a$, and $$\lim\limits_{k\to\infty}\frac{(1-|z_k|^2)|\f'(z_k)|}{1-|\f(z_k)|^2}=1.$$

\item[{\normalfont{(d)}}] Either $\f\in\hbox{Aut}(\BigD)$ or for every $a\in \BigD$ there
exists a sequence $\{z_k\}$ in $\BigD$ such that $|z_k|\to 1$,
$\f(z_k)\to a$, and $$\lim\limits_{k\to\infty}\frac{(1-|z_k|^2)|\f'(z_k)|}{1-|\f(z_k)|^2}=1.$$

\item[{\normalfont{(e)}}] Either $\f\in\hbox{Aut}(\BigD)$ or the zeros of $\f$ form an
infinite sequence $\{z_k\}$ in $\BigD$ such that
$$\limsup_{k\to\infty}(1-|z_k|^2)|\f'(z_k)|=1.$$

\item[{\normalfont{(f)}}] Either $\f\in\hbox{Aut}(\BigD)$ or $\f=gB$, where $g$ is a
non-vanishing analytic function mapping $\BigD$ into itself or a
constant of modulus 1, and $B$ is an infinite Blaschke product whose
zero set $Z$ contains a sequence $\{z_k\}$ such that $|g(z_k)|\to 1$
and $$\lim_{k\to\infty} \prod_{\zeta\in Z, \zeta\ne
z_k}\left|\frac{z_{k}-\z}{1-\overline{z_{k}}\z}\right|=1.$$

\item[{\normalfont{(g)}}] Either $\f\in\hbox{Aut}(\BigD)$ or there exists
$\{S_k\}_{k\in\BigN}$ in $\hbox{Aut}(\BigD)$ such that $|S_k(0)|\to
1$ and $\{\f\circ S_k\}$ approaches the identity locally uniformly
in $\BigD$.
\end{enumerate}
\end{theorem}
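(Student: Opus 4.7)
The plan is to separate the theorem into (i) the necessity of $\f(0)=0$, (ii) the equivalence of $C_\f$ being an isometry and condition (d) given $\f(0)=0$, and (iii) the pairwise equivalence of (a)--(g).

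For (i) and the easy direction of (ii), I test $C_\f$ on two Bloch functions. Taking $f(z)=z$, whose Bloch norm is $1$, forces $|\f(0)|+\b_\f=1$. Taking the disk automorphism $f_a(z)=(z-a)/(1-\bar az)$ with $a=\f(0)$, whose Bloch norm is $|a|+1$, yields $\|f_a\circ\f\|_{\cal B}=\b_{f_a\circ\f}\le 1$ by Schwarz--Pick, so $|a|+1\le 1$; hence $\f(0)=0$, and then $\b_\f=1$, which is (a). Conversely, assuming $\f(0)=0$ and (d), the factorization
$$(1-|z|^2)|(f\circ\f)'(z)| = (1-|\f(z)|^2)|f'(\f(z))|\cdot\frac{(1-|z|^2)|\f'(z)|}{1-|\f(z)|^2},$$
applied along extremal sequences $a_n$ for $\b_f$, with (d) furnishing $z_{n,k}$ satisfying $\f(z_{n,k})\to a_n$ and ratio tending to $1$, gives $\b_{f\circ\f}\ge\b_f$ by diagonal extraction. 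The opposite inequality is Schwarz--Pick; combined with $f(\f(0))=f(0)$, the isometry follows.

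For (iii), the implications (d)$\Rightarrow$(c)$\Rightarrow$(b)$\Rightarrow$(a) are pointwise Schwarz--Pick. The substantive content is (a)$\Rightarrow$(d). Assume (a) and $\f\notin\hbox{Aut}(\BigD)$, and pick $w_k$ with $(1-|w_k|^2)|\f'(w_k)|\to 1$. Schwarz--Pick forces $\f(w_k)\to 0$, and if $|w_k|$ had a limit inside $\BigD$ then equality in Schwarz--Pick at that limit would make $\f$ an automorphism; thus $|w_k|\to 1$. Let $\phi_c$ denote the disk involution swapping $0$ and $c$. The normalized maps $\tilde\q_k=\phi_{\f(w_k)}\circ\f\circ\phi_{w_k}$ are disk self-maps fixing $0$, whose derivatives at $0$ have modulus equal to the pseudo-hyperbolic ratio at $w_k$ and hence tend to $1$. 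By Montel and Schwarz's lemma a subsequence converges locally uniformly to a rotation $z\mapsto\lambda z$, which is condition (g) with $S_k=\phi_{w_k}$. For any $a\in\BigD$, an appropriate $b\in\BigD$ (determined by $\lambda$) gives $z_k=\phi_{w_k}(b)$ satisfying $|z_k|\to 1$, $\f(z_k)\to a$, and (by invariance of the pseudo-hyperbolic ratio under automorphisms) the ratio at $z_k$ tending to $1$: this is (d). Condition (e) follows by Hurwitz applied to $\tilde\q_k\to\lambda z$, producing a zero of $\f$ in every shrinking pseudo-hyperbolic neighborhood of $w_k$ and hence infinitely many zeros with extremal derivatives. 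Condition (f) is the Riesz factorization of $\f$ into the Blaschke product $B$ formed by those zeros times a non-vanishing bounded factor $g$, with the product and $|g(z_k)|\to 1$ conditions re-encoding the extremality in (e).

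The main obstacle is the implication (a)$\Rightarrow$(d). Once the rotation limit is in hand, the remaining implications in (iii) and the converse of (ii) follow cleanly, but constructing that limit and verifying that the pseudo-hyperbolic ratio really does tend to $1$ along the preimage sequences $z_k$ requires careful tracking of the automorphism derivatives and a clean handling of the dichotomy between the automorphism case and the non-automorphism case.
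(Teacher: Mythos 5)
First, note that the paper does not prove Theorem~\ref{cmain} at all: it is quoted from the literature, with condition (d) attributed to \cite{MV} and the remaining equivalences to \cite{C}, \cite{CC2}, \cite{CC3}. So there is no in-paper argument to compare against; what follows assesses your reconstruction on its own terms. Your treatment of the two genuinely hard pieces is essentially sound: the test functions $z$ and $(z-a)/(1-\bar a z)$ correctly force $\f(0)=0$ and $\b_\f=1$, the renormalization $\phi_{\f(w_k)}\circ\f\circ\phi_{w_k}\to\lambda z$ is the right engine for (a)$\Rightarrow$(d), and the chain-rule computation along approximate preimages gives $\b_{f\circ\f}\ge\b_f$. (Two small repairs: for (g) you must precompose with $z\mapsto\bar\lambda z$ to turn the rotation limit into the identity; and Hurwitz must be applied to $\f\circ\phi_{w_k}\to\lambda z$, not to $\tilde\q_k$, whose zeros are preimages of $\f(w_k)$ rather than of $0$.)

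The genuine gap is the sentence ``the implications (d)$\Rightarrow$(c)$\Rightarrow$(b)$\Rightarrow$(a) are pointwise Schwarz--Pick,'' on which your entire cycle of equivalences rests. Neither of the first and last links is pointwise. (d)$\Rightarrow$(c) requires upgrading $\f(z_k)\to a$ to exact preimages $\f(z_k)=a$, which needs the Hurwitz/argument-principle step you use elsewhere, not Schwarz--Pick. More seriously, (b)$\Rightarrow$(a) cannot be extracted pointwise: Schwarz--Pick gives only $\b_\f\le B_\f\le 1$, and to conclude $\b_\f=1$ from $B_\f=1$ one must produce a sequence along which the hyperbolic derivative tends to $1$ \emph{and} $|\f(z_k)|$ stays away from $\partial\BigD$. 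Such a sequence need not exist: for $\f(z)=z(1+z)/2$ one has $\f(0)=0$ and, by direct computation (or Julia--Carath\'eodory at the boundary fixed point $1$, where the angular derivative is $3/2$), the ratio $(1-|z|^2)|\f'(z)|/(1-|\f(z)|^2)\to 1$ as $z\to 1$ radially, so $B_\f=1$; yet $\b_\f=\sup_t(1-t^2)(1+2t)/2<1$ and $C_\f$ is not an isometry. So the cycle as drawn does not close through (b), and condition (b) requires separate, careful treatment (as a necessary condition it is immediate; as a sufficient one it cannot be handled the way you propose). Finally, for a complete equivalence you still owe the converse directions (e)$\Rightarrow$(a), (f)$\Rightarrow$(e), and (g)$\Rightarrow$(a) explicitly; the last follows from lower semicontinuity of $f\mapsto\b_f$ under locally uniform convergence (the mechanism of Theorem~\ref{lsuf}), which your sketch never invokes.
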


Condition (d) was noted in \cite{MV}. For the equivalence of the other conditions see \cite{CC2}, \cite{CC3}, \cite{C}, and \cite{C3}.

Condition (f) yields a recipe for constructing examples of isometric composition operators induced by symbols that are not rotations. Easily constructible examples are Blaschke products whose zero sets $\{z_k\}$ are {\bf thin}, that is, satisfy the condition $$\lim_{k\to\infty} \prod_{j\ne k}\left|\frac{z_{k}-z_j}{1-\overline{z_{k}}z_j}\right|=1.$$

In this article, we study the symbols of the isometric composition operators on the Bloch space of a bounded homogeneous domain in $\C^n$ ($n\in \BigN$, $n\ge 2$) and, when needed, restrict our attention to bounded symmetric domains. Our goal is obtain in higher dimensions conditions analogous to (a)-(g) in Theorem~\ref{cmain} to whatever extent is possible.

\subsection{Organization of the paper}

In Section 2, we review the background on Bloch functions on a bounded homogeneous domain in $\BigC^n$. Furthermore, we recall the Cartan classification of bounded symmetric domains and the corresponding invariant metrics, known as {\bf Bergman metrics}.

In Section 3, we give a characterization of the Bloch semi-norm of a Bloch function on a bounded homogeneous domain as the maximum dilation of $f$ with respect to the Bergman distance, which allows us to show that the composition operator $C_\f$ is bounded on the Bloch space $\cal{B}$ of the domain. We also provide estimates for its operator norm. The boundedness of $C_\f$ was observed in \cite{SL} and in \cite{ZS}, but the authors only showed boundedness with respect to the Bloch semi-norm (which yields the boundedness with respect to the Bloch norm under the assumption that $\f$ fixes the base point of the domain used to define the Bloch norm). We then show that the correspondence $f\mapsto \b_f$ is lower-semi-continuous on the Bloch space.

In Section 4, we recall the classification of the Bloch constant of a bounded symmetric domain $D$ (namely, the supremum of the Bloch semi-norms of the holomorphic functions mapping $D$ into $\BigD$) in terms of the Cartan class and the dimension of $D$. We then show that for most domains, the canonical projections map $D$ into $\BigD$ and have maximal Bloch semi-norm, a property that will be used in Section~6 to derive a necessary condition for a holomorphic self-map of a bounded symmetric domain of this type to induce an isometry on the Bloch space.

In Section 5, we give some simple properties of the maximum Bergman dilation of a holomorphic self-map and use the lower-semi-continuity of the Bloch semi-norm to obtain a sufficient condition for a holomorphic self-map on a bounded homogeneous domain $D$ to induce an isometry on $\cal{B}$ under composition. We use this result to generate non-trivial examples in the special case when $D$ has $\BigD$ as a factor.

In Section 6, we give some necessary conditions for a holomorphic self-map of $D$ to be the symbol of an isometric composition operator on $\cal{B}$ when $D$ is a product of Cartan classical domains. We conclude the section with a conjecture. 

In Section 7 we analyze the spectrum of $C_\f$ when this is an isometry on $\cal{B}$ in the case when $D$ is the unit polydisk. We deduce the full description of the spectrum in the one-dimensional case.

We conclude the paper (Section 8) with a list of open questions.

\section{Bloch functions in higher dimensions}

A {\bf homogeneous domain} in $\BigC^n$ is a domain $D$ such that Aut$(D)$ acts transitively on $D$, i.e. given any two points $z,w\in D$ there exists $T\in {\rm{Aut}}(D)$ such that $T(z)=w$.

In \cite{H}, Hahn introduced the notion of Bloch function on a
bounded homogeneous domain in $\BigC^n$.

Let $f$ be a complex-valued holomorphic function on a bounded homogeneous domain $D$ in $\BigC^n$.
For $u,v\in \C^n$, let $\langle u,v\rangle =\de\sum_{k=1}^nu_k\overline{v_k}$, and for $z\in D$, let $(\nabla f)(z)u=\langle (\nabla f)(z), \overline{u}\rangle$, where $(\nabla f)(z)$ is the gradient of $f$ at $z$. Denote by $H_z(u,\overline{v})$ the Bergman metric on $D$, that is, the positive definite Hermitian form which is invariant under automorphisms of $D$. Using a different approach from Hahn's, in \cite{T1} Timoney defined a Bloch function on $D$ as a holomorphic function $f$ on $D$ such that
$\b_f=\sup\limits_{z\in D}Q_f(z)$ is finite, where $$Q_f(z)=\sup_{u\in \BigC^n\backslash\{0\}}\frac{|(\nabla f)(z)u|}{H_z(u,\overline{u})^{1/2}}.$$ Denote by $\cal{B}$ the space of Bloch functions on $D$. The map $f\mapsto \b_f$ is a semi-norm on $\cal{B}$, and fixing any point $z_0\in D$, the set $\cal{B}$ is a Banach space, called the {\bf Bloch space}, under the norm $\|f\|_{\cal{B}}=|f(z_0)|+\b_f$.
For convenience, throughout this paper we shall assume that $0\in D$ and $z_0=0$.  The Bloch space contains the space of bounded holomorphic functions \cite{T1}.  When $D=\BigD$, the above definition of Bloch function
reduces to condition (\ref{blochone}).

Excellent references on Bloch functions include \cite{ACP} for the one-variable case, \cite{T1} and \cite{T2} for the several-variable case. See also \cite{Ci} for an overview of Bloch functions on the disk and its connection to other function spaces.  Bloch functions have been defined on more general classes of bounded domains, such as strongly pseudo-convex domains \cite{KM}. These domains, however, have sparse, and often trivial, automorphism groups, and operator theory problems are much more difficult to treat.

A domain $D$ in $\BigC^n$ is said to be {\bf symmetric} if for each $z_0\in D$, there exists an automorphism $S$ of $D$ which is an involution (i.e. $S\circ S$ is the identity) and has $z_0$ as an isolated fixed point. Symmetric domains are homogeneous (see \cite{H}, pp. 170, 301). Examples of symmetric domains are the unit ball
$$\BigB_n=\{z=(z_1,\dots,z_n)\in \BigC^n: \|z\|<1\},$$ where $\|z\|=\left(\sum_{k=1}^n|z_k|^2\right)^{1/2}$, and the unit polydisk $$\BigD^n=\{z=(z_1,\dots,z_n)\in \BigC^n: |z_k|<1 \hbox{ for all }k=1,\dots,n\}.$$
We now describe their groups of automorphisms, and their Bergman metrics.

The automorphism group of the unit ball can be described as
$$\Aut(\BigB_n)=\{U\circ \f_a: U \hbox{ unitary}, a\in \BigB_n\},$$
where for $z\in \BigB_n$, $\f_0(z)=z$, while for
$a\in\BigB_n\backslash\{0\}$,
\begin{equation}\f_a(z)=\frac{a-P_a(z)-(1-\|a\|^2)^{1/2}Q_a(z)}{1-\langle
z,a\rangle},\label{fia}\end{equation} with    $P_a(z)=\frac{\langle
z,a\rangle}{\|a\|^2}a$, and $Q_a(z)=z-P_a(z)$ (see \cite{R},
Theorem~2.2.5).  For $z\in\B_n$, $u,v\in\BigC^n$, the Bergman metric
on $\B_n$ is given by
$$H_z(u,\overline{v})=\frac{(n+1)[(1-\|z\|^2)\langle u,v\rangle+\langle u,z\rangle \langle z,v\rangle]}{2(1-\|z\|^2)^2}.$$

The automorphism group of the polydisk is given by $$\Aut(\BigD^n)=\{(T_1(z_{\t(1)}), \dots, T_n(z_{\t(n)})): T_k\in \Aut(\BigD),\t \in S_n\},$$ where $S_n$ is the group of permutations on the set $\{1,\dots,n\}$ (see \cite{R2}, p. 167).
For $z\in \BigD^n$, $u,v\in\BigC^n$, the Bergman metric on $\BigD^n$
is given by
$$H_z(u,\overline{v})=\sum_{k=1}^n\frac{u_k\overline{v_k}}{(1-|z_k|^2)^2}.$$

For a function $\f$ mapping a domain $D\subset \BigC^n$ into $\BigC^n$ and for $z\in D$, let $J\f(z)$ be the Jacobian matrix of $\f$ at $z$, that is, the $n\times n$ matrix whose $(j,k)$-entry is $\partial \f_j(z)/ \partial z_k$.  By definition of the Bergman metric, if $D$ is a bounded homogeneous
domain and $S\in \hbox{Aut}(D)$, then for all $z\in D$, and for all
$u\in \BigC^n$ \begin{equation}
H_{S(z)}(JS(z)u,\overline{JS(z)u})=H_z(u,\overline{u}),\label{equa}\end{equation}
where $JS(z)u$ is the usual matrix product where $u$ is viewed as a
column vector.  As an immediate consequence of (\ref{equa}), we
deduce that the Bloch semi-norm is invariant under right composition
of automorphisms.

We use the notation $H(D,D')$ for the class of holomorphic functions mapping a domain $D$ in $\BigC^n$ into a domain $D'$ in $\BigC^m$ and use the abbreviation $H(D)$ for $H(D,D)$.

In \cite{T1} (proof of Theorem 2.12) it was shown that for $n,m\in \BigN$, if $D\subset \BigC^{n}$ and $D'\subset \BigC^{m}$ are bounded homogeneous domains, then there is a constant $c>0$ depending only on $D$ and $D'$ such that for any $\f\in H(D,D')$, $z \in D$, $u \in \mathbb{C}^n$
$$H^{D'}_{\f(z)}(J\f(z)u,\overline{J\f(z)u})\le c H^{D}_z(u,\overline{u}),$$
where $H^{D}_z$ and $H^{D'}_{\f(z)}$ are the Bergman metrics on $D$ and $D'$ at $z$ and $\f(z)$, respectively.
In particular, if $D$ is a bounded homogeneous domain and $\f$ is a holomorphic map of $D$ into itself, then
$$B_\f=\sup_{z\in D}\sup_{u\in\BigC^n\backslash\{0\}}\frac{H_{\f(z)}(J\f(z)u,\overline{J\f(z)u})^{1/2}}{H_z(u,\overline{u})^{1/2}}$$
is bounded above by a constant independent of $\f$. Thus, for any Bloch function $f$ on $D$ and any $z\in D$, we have
$$Q_{f\circ \f}(z)\le \sup_{u\in \BigC^n\backslash\{0\}}\left(\frac{H_{\f(z)}(J\f(z)u,\overline{J\f(z)u})}{H_z(u,\overline{u})}\right)^{1/2} Q_f(\f(z)).$$ Thus
\begin{equation} Q_{f\circ \f}(z)\le B_\f Q_f(\f(z)).\label{mainf}\end{equation}
 Furthermore,
$Q_{f\circ \f}(z)=Q_f(\f(z))$ if $\f\in{\rm Aut}(D)$. Consequently,
\begin{equation} \b_{f\circ \f}\le B_\f\b_f,\label{semibdd}\end{equation}
and so composition operators whose symbol fixes $0$ are bounded on $\cal{B}$ and the composition operators induced by the automorphisms of $D$ preserve the Bloch semi-norm. Moreover, if $C_\f$ preserves the Bloch semi-norm, then $B_\f\ge 1$. In Section 3, we shall prove that $C_\f$ is in fact bounded on the Bloch space without the assumption $\f(0)=0$.

We call $B_\f$ the {\bf{Bergman constant}} of $\f$. By (\ref{equa}), the Bergman constant of an automorphism is 1. When $n=1$, $D=\BigD$, and $\f$ is any analytic self-map of $\BigD$ we see that $H_z(u,\overline{u})=\frac{|u|^2}{(1-|z|^2)^2}$. Thus, $B_\f$ is the constant in part (b) of Theorem~\ref{cmain}.

The one-dimensional case suggests that the value of the Bergman constant may play a role in helping us determine which holomorphic self-maps of a bounded homogeneous domain induce composition operators that are isometries on the Bloch space. The higher-dimensional case is more difficult to treat because, in general, there may exist functions $\f\in H(D)$ whose Bergman constant is larger than 1. Indeed, in \cite{Ko} Kor\'anyi proved that the value of the maximal Bergman constant is related to the rank of the domain. We state his result in  Theorem~\ref{koranyi}. An example of a function $\f$ whose Bergman constant is larger than 1 is $\f(z)=(z_1,\dots,z_1)$ for $z\in\BigD^n$. 
However, the corresponding operator $C_\f$ is not an isometry on
$\cal{B}$. Indeed, as we shall observe in Theorem~\ref{neccond}, if
$C_\f$ is an isometry on $\cal{B}$, then the components of $\f$ are
linearly independent.

Because of the invariance of the semi-norm under composition by automorphisms, easy examples of symbols of isometric composition operators are the automorphisms that fix 0. In the case of the unit polydisk they are the functions $(z_1,\dots,z_n)\mapsto (\l_1z_{\t(1)},\dots,\l_nz_{\t(n)})$ where the $\l_j$ ($j=1,\dots,n$) are unimodular constants and $\t\in S_n$. In the case of the unit ball, they are the unitary transformations.

In \cite{CC3} the problem of characterizing the holomorphic self-maps of the unit polydisk which are symbols of isometric composition operators was studied. Although a complete characterization was not achieved, several necessary conditions were obtained and a large class of symbols of isometric composition operators was described. Other references on composition operators on the Bloch space in higher dimensions include \cite{SL} and \cite{ZS}.

Cartan \cite{C1} proved that any bounded symmetric domain is biholomorphically equivalent to a finite product of irreducible bounded symmetric domains, unique up to rearrangement of the factors. He then classified all the irreducible domains into six classes, four of which form infinite families (known as {\bf{ Cartan classical domains}}) and two classes each containing a single domain of dimension 16 and 27, respectively, called {\bf{exceptional domains}}. All these classes contain the origin. A bounded symmetric domain $D$ is said to be in {\bf standard form} if it has the form $D=D_1\times \cdots\times D_k$, where each $D_j$ is a Cartan classical domain or an exceptional domain.

The Cartan classical domains $R_I,R_{II},R_{III},R_{IV}$ and the
respective Bergman metrics are described below. The notation we
use conforms to Kobayashi's in \cite{Kob}, except we scale the
Bergman metrics by dividing them by 4.  For a description of their
automorphism groups see \cite{Kob}. For a description of the
exceptional domains $R_{V}$ and $R_{VI}$, see \cite{D}.

Let $\cal{M}_{m\times n}$ denote the set of $m\times n$ matrices over $\BigC$ and let $\cal{M}_n=\cal{M}_{n\times n}$. Let $I_n\in\cal{M}_n$ be the identity matrix and let $Z^*$ be the adjoint of the matrix $Z$. Then
$$\begin{aligned} R_I&=\{Z\in\cal{M}_{m\times n}: I_m-ZZ^*>0\},\hbox{ for }m\ge n\ge 1,\\
H_Z(U,\overline{V})&=\frac{m+n}2\hbox{Trace}[(I_m-ZZ^*)^{-1}U(I_n-Z^*Z)^{-1}V^*],\\
 R_{II}&=\{Z\in\cal{M}_{n}: Z=Z^T, I_n-ZZ^*>0\},\hbox{ for }n\ge 1,\\
H_Z(U,\overline{V})&=\frac{n+1}2\hbox{Trace}[(I_n-ZZ^*)^{-1}U(I_n-Z^*Z)^{-1}V^*],\\
R_{III}&=\{Z\in\cal{M}_{n}: Z=-Z^T, I_n-ZZ^*>0\},\hbox{ for }n\ge 2,\\
H_Z(U,\overline{V})&=\frac{n-1}2\hbox{Trace}[(I_n-ZZ^*)^{-1}U(I_n-Z^*Z)^{-1}V^*],\\
R_{IV}&=\left\{z\in\BigC^n: \left|\sum z_j^2\right|^2+1-2\|z\|^2>0,\,\left|\sum z_j^2\right|^2<1\right\},\,1\le n\ne 2,\\
H_z(u,\overline{v})&=nAu[A(I_n-z^T\overline{z})+(I_n-z^T\overline{z})Z^*z(I_n-z^T\overline{z})]v^*,\end{aligned}$$
where $z^T$ is the transpose of $z$ and $A=|\sum_{j=1}^nz_j^2|^2+1-2\|z\|^2$.

Since some irreducible domains may belong to different classes, we add the following dimensional restrictions that yield uniqueness: $n\ge 2$ for domains in $R_{II}$, and $n\ge 5$ for domains in $R_{III}$ and $R_{IV}$.

\section{Bloch semi-norm as a Lipschitz number}

In this section, we give an alternate description of the Bloch semi-norm and analyze some useful properties of Bloch functions.

We begin by observing that the Bloch functions on a bounded homogeneous domain $D$ are precisely the Lipschitz maps between $D$ under the distance $\r$ induced by the Bergman metric and the complex plane under the Euclidean distance. To see this, we recall a result of \cite{G} connecting local derivatives to Lipschitz mappings.
If we consider $f:D\to\BigC$ as a map between Riemannian manifolds, then $Q_f(z)$ is exactly the operator norm of $df(z)$, the induced linear transformation on the tangent space at $z\in D$, with respect to the Bergman metric $H_z$ on $D$ and the Euclidean metric on $\BigC$.  Thus, $\b_f$ is the supremum of $\|df(z)\|$ over all $z\in D$.  The dilation ${\rm{dil}}(f)$  of $f$ is the global Lipschitz number of $f$: $${\rm{dil}}(f)=\sup_{\stackrel{\scriptstyle z\ne w}{z,w\in D}}\displaystyle\frac{|f(z)-f(w)|}{\r(z,w)}.$$

A {\bf length space} is a Riemannian manifold in which the distance
between two points is the infimum of the lengths of geodesics
connecting the points.  Examples include bounded homogeneous domains
under the Bergman metric. An example of a space which is not a
length space is the complement of a closed ball in $\BigC^n$ under
the Euclidean metric.  In Property 1.8 bis of \cite{G}, it is shown that
in the special case when $M$ is a length space and $f:M\to N$ is a
smooth map between Riemannian manifolds, $${\rm{dil}}(f) =
\sup_{x\in M}\|df(x)\|.$$ We deduce

\begin{theorem}\label{tlip} Let $D$ be a bounded homogeneous domain and let $f\in H(D,\BigC)$. Then $f$ is Bloch if and only if $f$ is a Lipschitz map as a function from $D$ under the Bergman metric and the complex plane under the Euclidean metric. Furthermore
$$\b_f=\sup_{z\ne w}\frac{|f(z)-f(w)|}{\r(z,w)}.$$
\end{theorem}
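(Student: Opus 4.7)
The plan is to reduce the theorem to Gromov's result stated immediately before it (Property 1.8 bis of \cite{G}): if $M$ is a length space and $f: M \to N$ is a smooth map between Riemannian manifolds, then $\operatorname{dil}(f) = \sup_{x \in M} \|df(x)\|$. Once this applies, the two conclusions of the theorem follow simultaneously, since $\beta_f$ has already been identified with $\sup_{z \in D} \|df(z)\|$ in the discussion preceding the statement.

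First I would verify the hypothesis that $D$, equipped with the Bergman distance $\rho$, is a length space. Since $D$ is a bounded homogeneous domain, the Bergman metric $H_z$ is a smooth $\operatorname{Aut}(D)$-invariant Riemannian (in fact K\"ahler) metric on the connected manifold $D$, and $\rho$ is by definition the associated path-length distance: for any $z, w \in D$, $\rho(z,w)$ is the infimum of the lengths (computed via $H$) of piecewise smooth curves from $z$ to $w$. This is exactly the definition of a length space, so the hypothesis is automatic.

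Next I would assemble the identifications. Viewing $f \in H(D, \mathbb{C})$ as a smooth map between Riemannian manifolds (with $\mathbb{C}$ carrying the Euclidean metric), the induced map $df(z)$ on the tangent space at $z$ has operator norm
\[
\|df(z)\| = \sup_{u \in \mathbb{C}^n \setminus\{0\}} \frac{|(\nabla f)(z)u|}{H_z(u,\overline u)^{1/2}} = Q_f(z),
\]
as noted in the excerpt. Taking the supremum over $z \in D$ yields $\sup_{z} \|df(z)\| = \beta_f$. Applying Gromov's equality gives
\[
\operatorname{dil}(f) = \sup_{z \ne w} \frac{|f(z) - f(w)|}{\rho(z,w)} = \sup_{z \in D} \|df(z)\| = \beta_f.
\]
Both conclusions follow: $f$ is Bloch ($\beta_f < \infty$) iff $f$ is Lipschitz from $(D,\rho)$ to $(\mathbb{C}, |\cdot|)$ ($\operatorname{dil}(f) < \infty$), and in that case the Bloch semi-norm equals the global Lipschitz constant.

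The only genuine obstacle is justifying the length-space property rigorously; everything else is bookkeeping around the identification $Q_f(z) = \|df(z)\|$ and a direct citation of \cite{G}. Since the Bergman distance on a bounded homogeneous domain is defined precisely as the path metric of a smooth Riemannian metric on a connected manifold, this obstacle dissolves, and the theorem follows in a few lines.
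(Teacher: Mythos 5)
Your proposal is correct and follows exactly the paper's own route: identify $Q_f(z)$ with the operator norm $\|df(z)\|$, note that a bounded homogeneous domain under the Bergman metric is a length space, and invoke Property 1.8 bis of \cite{G} to equate $\sup_z\|df(z)\|$ with the global dilation. Nothing further is needed.
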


A proof that does not use differential geometry can be found in \cite{C2} (Theorem~10) and in \cite{Z} (Theorem~5.1.6) for the case of the unit disk, and in \cite{Z2} (Theorem~3.6) for the case of the unit ball in $\BigC^n$.

The following result is an extension of Theorem~2 of \cite{X} to any bounded homogeneous domain.

\begin{theorem}\label{bddcomp} Let $D$ be a bounded homogeneous domain and let $\f\in H(D)$. Then $C_\f$ is a bounded operator on the Bloch space of $D$. Furthermore
$$1\le \|C_\f\|\le \max\{1,\r(\f(0),0)+B_\f\}.$$ In particular, if $\f(0)=0$, then $1\le \|C_\f\|\le \max\{1,B_\f\}$.\end{theorem}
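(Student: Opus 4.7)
The plan is to prove the two bounds separately, using the characterization of $\b_f$ as a Lipschitz constant from Theorem~\ref{tlip} together with the semi-norm estimate (\ref{semibdd}).

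For the lower bound $\|C_\f\|\ge 1$, I would test $C_\f$ on the constant function $f\equiv 1$. Then $f\circ\f\equiv 1$, so $\|C_\f f\|_{\cal B}=|f(0)|+\b_{f\circ\f}=1=\|f\|_{\cal B}$, yielding $\|C_\f\|\ge 1$ at once.

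For the upper bound, fix $f\in{\cal B}$ and write
$$\|C_\f f\|_{\cal B}=|f(\f(0))|+\b_{f\circ\f}.$$
The estimate (\ref{semibdd}) already established in the excerpt gives $\b_{f\circ\f}\le B_\f\,\b_f$. To control $|f(\f(0))|$, I would apply Theorem~\ref{tlip}: since $f$ is Lipschitz with constant $\b_f$ from $(D,\r)$ to $(\BigC,|\cdot|)$, we get
$$|f(\f(0))|\le |f(0)|+|f(\f(0))-f(0)|\le |f(0)|+\b_f\,\r(\f(0),0).$$
Combining these two estimates,
$$\|C_\f f\|_{\cal B}\le |f(0)|+\bigl(\r(\f(0),0)+B_\f\bigr)\b_f\le \max\{1,\r(\f(0),0)+B_\f\}\,\|f\|_{\cal B},$$
which is the claimed bound. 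The special case $\f(0)=0$ follows by noting that $\r(0,0)=0$.

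There is no real obstacle here: the work has essentially been done in the preceding material (boundedness of the Bergman constant $B_\f$ for self-maps of a bounded homogeneous domain, and the Lipschitz description of $\b_f$). The only point to check is that $B_\f<\infty$, which was recorded right before the statement via Timoney's estimate. Everything else is a linear combination of those two ingredients, plus the trivial constant-function test for the lower bound.
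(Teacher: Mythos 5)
Your proposal is correct and follows essentially the same route as the paper: the constant function $1$ for the lower bound, the Lipschitz characterization of $\b_f$ from Theorem~\ref{tlip} to control $|f(\f(0))|$, and the semi-norm estimate (\ref{semibdd}) for $\b_{f\circ\f}$. The only cosmetic difference is that you absorb the final estimate into the maximum in one step (using $\max\{1,\r(\f(0),0)+B_\f\}\ge 1$ to dominate the $|f(0)|$ term), whereas the paper splits into two cases according to the sign of $\r(\f(0),0)-1+B_\f$; the content is identical.
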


\begin{proof} By Theorem~\ref{tlip}, for any $f\in \cal{B}$, $|f(\f(0))-f(0)|\le \r(\f(0),0)\b_f$, whence
$$|f(\f(0))|\le |f(0)|+|f(\f(0))-f(0)|\le \|f\|_{\cal{B}}+(\r(\f(0),0)-1)\b_f.$$
Thus, using (\ref{semibdd}) we obtain
$$\|f\circ \f\|_{\cal{B}}=|f(\f(0))|+\b_{f\circ\f}\le \|f\|_{\cal{B}}+(\r(\f(0),0)-1+B_\f)\b_f.$$
If $\r(\f(0),0)-1+B_\f\le 0$, then $\|f\circ \f\|_{\cal{B}}\le \|f\|_{\cal{B}}.$ If
$\r(\f(0),0)-1+B_\f\ge 0$, then $$\|f\circ \f\|_{\cal{B}}\le (\r(\f(0),0)+B_\f)\|f\|_{\cal{B}}.$$
Hence $C_\f$ is bounded on $\cal{B}$ and $$\|C_\f\|=\sup_{\|f\|_{\cal{B}}=1}\|f\circ \f\|_{\cal{B}}\le \max\{1,\r(\f(0),0)+B_\f\}.$$ The lower estimate can be deduced immediately by taking as a test function $f$ the constant function 1. \end{proof}

The following result is the extension to the unit ball of Corollary~1 and Theorem~2 in \cite{X}.

\begin{corollary}\label{lowest} For any $\f\in H(\BigB_n)$,
$$\max\left\{1,\frac12\log\frac{1+\|\f(0)\|}{1-\|\f(0)\|}\right\}\le \|C_\f\|\le \max\left\{1,\frac12\log\frac{1+\|\f(0)\|}{1-\|\f(0)\|}+B_\f\right\}.$$ In particular, if $\f(0)=0$, then $\|C_\f\|=1$.
\end{corollary}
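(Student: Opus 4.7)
My plan is to combine Theorem~\ref{bddcomp} with the explicit form of the Bergman distance on $\B_n$ and a carefully chosen test function.

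For the upper bound, I would invoke Theorem~\ref{bddcomp}, which gives $\|C_\f\|\le\max\{1,\r(\f(0),0)+B_\f\}$, reducing matters to evaluating $\r(\f(0),0)$. On $\B_n$ the radial segment $t\mapsto t\f(0)/\|\f(0)\|$, $t\in[0,\|\f(0)\|]$, is the Bergman geodesic from $0$ to $\f(0)$, and substituting into the formula for $H_z$ collapses the arc-length integral to $\int_0^{\|\f(0)\|}(1-t^2)^{-1}\,dt=\frac12\log\frac{1+\|\f(0)\|}{1-\|\f(0)\|}$, which is the claimed quantity.

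For the lower bound, $\|C_\f\|\ge 1$ is immediate by applying $C_\f$ to the constant function $1$. Assuming $a:=\f(0)\ne 0$, I would set $e=a/\|a\|$ and take as test function the slice function $f(z)=\frac12\log\frac{1+\langle z,e\rangle}{1-\langle z,e\rangle}$. This is holomorphic on $\B_n$ since $|\langle z,e\rangle|\le\|z\|<1$ by Cauchy--Schwarz, satisfies $f(0)=0$, and obeys $|f(a)|=\frac12\log\frac{1+\|a\|}{1-\|a\|}$. Writing $f=g\circ\pi_e$ with $g(\z)=\frac12\log\frac{1+\z}{1-\z}$ (which has $\b_g=1$ on $\BigD$) and $\pi_e(z)=\langle z,e\rangle$, I would combine Theorem~\ref{tlip} with the contracting property of $\pi_e:\B_n\to\BigD$ between the respective Bergman metrics to conclude $\b_f\le 1$; the reverse inequality follows from computing the Lipschitz ratio along the radial slice $t\mapsto te$. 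Hence $\|f\|_{\cal B}=1$, whence
\[
\|C_\f\|\ge\|f\circ\f\|_{\cal B}\ge|(f\circ\f)(0)|=\tfrac12\log\tfrac{1+\|\f(0)\|}{1-\|\f(0)\|}.
\]

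For the final assertion, the hypothesis $\f(0)=0$ forces $\r(\f(0),0)=0$ and collapses the upper bound to $\max\{1,B_\f\}$. Because $\B_n$ is a rank-one bounded symmetric domain, the invariant Schwarz--Pick lemma yields $B_\f\le 1$ for every holomorphic self-map of $\B_n$, so the upper bound equals $1$; together with the trivial $\|C_\f\|\ge 1$ this gives $\|C_\f\|=1$. The main technical point I foresee is verifying $\b_f=1$ for the slice function, which hinges on the contracting property of $\pi_e$ between the two Bergman metrics---a standard fact that is nevertheless sensitive to the precise normalizations of $H_z$ used on $\B_n$ and on $\BigD$.
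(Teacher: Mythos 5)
Your proposal is correct and follows the same overall route as the paper: the upper bound comes from Theorem~\ref{bddcomp} together with the identity $\r(0,\f(0))=\frac12\log\frac{1+\|\f(0)\|}{1-\|\f(0)\|}$, the lower bound from the very same test function (your $f(z)=\frac12\log\frac{1+\langle z,e\rangle}{1-\langle z,e\rangle}$ is the paper's $\frac12\log\frac{\|\f(0)\|+\langle z,\f(0)\rangle}{\|\f(0)\|-\langle z,\f(0)\rangle}$ after cancelling $\|\f(0)\|$), and the final claim from Kor\'anyi's Theorem~\ref{koranyi} applied to the rank-one domain $\B_n$. The one substantive divergence is in verifying $\b_f=1$: the paper computes $\|(\nabla f)(z)\|^2-|Rf(z)|^2$ explicitly and applies Lemma~\ref{charqf}, whereas you factor $f=g\circ\pi_e$ through the disk and invoke the Schwarz--Pick contraction of $\pi_e$ together with Theorem~\ref{tlip}; your route trades a gradient computation for a soft metric-decreasing argument, which is a legitimate and arguably cleaner alternative. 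The only caution is the normalization issue you yourself flag: with the Bergman metric on $\B_n$ as printed in Section~2 (carrying the factor $(n+1)/2$), the slice $\pi_e$ contracts by $\sqrt{2/(n+1)}$, the radial arc-length integrand is $\sqrt{(n+1)/2}\,(1-t^2)^{-1}$, and $Q_f(0)=\sqrt{2/(n+1)}$; the clean identities $\r(0,a)=\frac12\log\frac{1+\|a\|}{1-\|a\|}$ and $\b_f=1$ hold in the normalization underlying Lemma~\ref{charqf} and Proposition~1.21 of \cite{Z2}. The two constants cancel in the product $\b_f\,\r(0,\f(0))$ bounding $|f(\f(0))|$, so your lower bound survives either convention, but you should fix one normalization and carry it consistently through both the geodesic computation and the contraction estimate; the published proof faces the identical bookkeeping, so this is not a gap particular to your argument.
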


For the proof we need the following result which is part of Theorem~3.1 of \cite{Z2}.

\begin{lemma}\label{charqf} {\rm \cite{Z2}} For all $f\in H(\BigB_n,\BigC)$, $z\in \BigB_n$,
\begin{equation} Q_f(z)=[(1-\|z\|^2)(\|(\nabla f)(z)\|^2-|Rf(z)|^2)]^{1/2},\label{charq}\end{equation}
where $Rf(z)=\de\sum_{k=1}^n z_k\frac{\partial f}{\partial z_k}(z)$. Therefore
$$\b_f=\sup_{z\in\BigB_n} [(1-\|z\|^2)(\|(\nabla f)(z)\|^2-|Rf(z)|^2)]^{1/2}.$$
\end{lemma}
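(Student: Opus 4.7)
The plan is to evaluate the supremum defining $Q_f(z)$ by direct linear algebra. On $\BigB_n$ the Bergman metric has the form $H_z(u,\overline{u}) = (1-\|z\|^2)^{-2}\, u^* M_z u$ (up to the normalization constant built into the definition of $H_z$), where $M_z = (1-\|z\|^2)I_n + zz^*$ is a positive-definite Hermitian matrix and $z$ is viewed as a column vector. The numerator $(\nabla f)(z)\,u = \sum_k \frac{\partial f}{\partial z_k}(z)\,u_k$ is the complex-linear functional $a^T u$, with $a$ the column vector of partial derivatives of $f$ at $z$. Thus
$$Q_f(z)^2 \;=\; (1-\|z\|^2)^2 \sup_{u \neq 0} \frac{|a^T u|^2}{u^* M_z u}.$$

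The first step is to invert $M_z$ explicitly. Since $M_z$ acts as $(1-\|z\|^2)I$ on the orthogonal complement of the line through $z$ and as the identity on that line, one verifies directly that
$$M_z^{-1} \;=\; \frac{1}{1-\|z\|^2}\bigl(I - zz^*\bigr).$$
The second step is the complex Rayleigh-quotient identity
$$\sup_{u \neq 0} \frac{|a^T u|^2}{u^* M u} \;=\; a^T M^{-1} \overline{a},$$
valid for any Hermitian positive-definite $M$. I would prove this by a Lagrange multiplier argument, treating $u$ and $\overline u$ as independent variables so that the stationary condition reads $Mu = \lambda\,\overline{a}$, and then solving for $\lambda$ from the constraint $u^* M u = 1$. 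Equivalently, one can substitute $v = M^{1/2}u$ and apply Cauchy--Schwarz to $a^T M^{-1/2} v$, taking care since $a^T u$ is a bilinear, not Hermitian, pairing in $a$ and $u$.

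Combining these, the supremum equals
$$\frac{1}{1-\|z\|^2}\bigl[\, a^T \overline{a} \;-\; (a^T z)(z^* \overline{a})\,\bigr].$$
Recognizing $a^T \overline{a} = \|\nabla f(z)\|^2$, $a^T z = \sum_k \frac{\partial f}{\partial z_k}(z)\, z_k = Rf(z)$, and $z^* \overline{a} = \overline{Rf(z)}$, this collapses to $\frac{1}{1-\|z\|^2}\bigl(\|\nabla f(z)\|^2 - |Rf(z)|^2\bigr)$. Multiplying by the $(1-\|z\|^2)^2$ factor from the metric yields the claimed formula for $Q_f(z)$, and taking the supremum over $z \in \BigB_n$ gives the expression for $\b_f$. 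The main subtlety is the complex Rayleigh identity, because the distinction between the bilinear form $a^T u$ and the Hermitian inner product $\langle a, u\rangle$ is crucial; once that is handled, the explicit inversion of $M_z$ is immediate thanks to its rank-one perturbation structure, and the remainder of the computation is routine.
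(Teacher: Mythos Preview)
The paper does not supply its own proof of this lemma; it is quoted verbatim from Zhu \cite{Z2} (Theorem~3.1 there), and the \texttt{proof} environment immediately following the lemma in the paper is actually the proof of Corollary~\ref{lowest}. So there is nothing to compare against, and the question is simply whether your argument is sound. It is: writing $H_z(u,\overline u)$ as $c\,(1-\|z\|^2)^{-2}\,u^*M_z u$ with $M_z=(1-\|z\|^2)I+zz^*$, inverting the rank-one perturbation to get $M_z^{-1}=(1-\|z\|^2)^{-1}(I-zz^*)$, and then applying the identity $\sup_{u\ne0}|a^Tu|^2/(u^*Mu)=a^TM^{-1}\overline a$ (which you correctly justify by the substitution $v=M^{1/2}u$ and Cauchy--Schwarz applied to $\langle v,M^{-1/2}\overline a\rangle$) is exactly the standard route, and the identifications $a^T\overline a=\|\nabla f(z)\|^2$, $a^Tz=Rf(z)$ are immediate.

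One caveat worth making explicit rather than sweeping under ``up to the normalization constant'': the paper's displayed Bergman metric on $\BigB_n$ carries the factor $(n+1)/2$, whereas the formula in the lemma does not. Zhu's convention in \cite{Z2} for $Q_f$ uses the hyperbolic metric without that dimensional factor, and that is the normalization under which \eqref{charq} holds literally. With the paper's own $H_z$ one would pick up an extra $\sqrt{2/(n+1)}$. This is a mismatch internal to the paper, not a defect in your argument, but since you are writing out a proof it would be cleaner to state at the outset which normalization you are using.
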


\begin{proof} The upper estimate follows immediately from Theorem~\ref{bddcomp}, since by  Proposition~1.21 of \cite{Z2}, $$\r(\f(0),0)=\frac12\log\frac{1+\|\f(0)\|}{1-\|\f(0)\|}.$$
  To prove the lower estimate, note that if $\f(0)=0$, then the result follows from Theorem~\ref{bddcomp}. So assume $\f(0)\ne 0$. For $z\in\BigB_n$, define
$$f(z)=\frac12\log\frac{\|\f(0)\|+\langle z,\f(0)\rangle}{\|\f(0)\|-\langle z,\f(0)\rangle}.$$ Then $f$ is holomorphic on $\BigB_n$, $f(0)=0$, and $f(\f(0))=\frac12\log\frac{1+\|\f(0)\|}{1-\|\f(0)\|}$.
Moreover, using the triangle inequality, we obtain
$$\begin{aligned}\|(\nabla f)(z)\|^2-|Rf(z)|^2&=\frac{\|\f(0)\|^2(\|\f(0)\|^2-|\langle z,\f(0)\rangle|^2)}{|\|\f(0)\|^2-\langle z,\f(0)\rangle^2|^2}\\
&\le \frac{\|\f(0)\|^2}{|\|\f(0)\|^2-\langle z,\f(0)\rangle^2|}.\end{aligned}$$
Thus, by (\ref{charq}), we deduce
$$Q_f(z)^2\le \frac{(1-\|z\|^2)\|\f(0)\|^2}{|\|\f(0)\|^2-\langle z,\f(0)\rangle^2|}=\frac{1-\|z\|^2}{|1-\langle z,u_0\rangle^2|}\le 1,$$
where $u_0=\frac{\f(0)}{\|\f(0)\|}$. On the other hand, it is immediate to check that $Q_f(0)=1$. Therefore, $f\in \cal{B}$ and $\b_f=1=\|f\|_{\cal{B}}$. Hence $\|C_\f\|\ge \|f\circ \f\|_{\cal{B}}\ge |f(\f(0))|$. Since $\|C_\f\|\ge 1$, the lower estimate follows at once. In Section~5 (see paragraph after Theorem~\ref{koranyi}), it will be shown that $B_\f\le 1$. Thus, in the case when $\f(0)=0$, we deduce that $\|C_\f\|=1$. \end{proof}

The characterization of the Bloch semi-norm in Theorem~\ref{tlip} leads to the following convergence theorem.

\begin{theorem}\label{semicont} Let $\{f_n\}$ be a sequence of Bloch functions on a bounded homogeneous domain $D$ which converges locally uniformly in $D$ to some holomorphic function $f$.  If the sequence $\{\b_{f_n}\}$ is bounded, then $f$ is Bloch and
$$\b_f \leq \liminf_{n \to \infty} \b_{f_n}.$$  That is, the
function $f \mapsto \b_f$ is lower semi-continuous on
$\cal{B}$ under the topology of uniform convergence on compact subsets of $D$.\end{theorem}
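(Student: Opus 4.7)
The plan is to leverage the Lipschitz reformulation of the Bloch semi-norm provided by Theorem~\ref{tlip}, which reduces the problem to a standard fact about pointwise limits of uniformly Lipschitz functions.

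First I would fix two arbitrary distinct points $z,w\in D$. Applying Theorem~\ref{tlip} to each $f_n$, I have the inequality
\[
|f_n(z)-f_n(w)|\le \b_{f_n}\,\r(z,w).
\]
Since $f_n\to f$ locally uniformly on $D$, in particular $f_n(z)\to f(z)$ and $f_n(w)\to f(w)$. Passing to the liminf on the right-hand side (and using that $|f_n(z)-f_n(w)|\to|f(z)-f(w)|$ on the left) yields
\[
|f(z)-f(w)|\le \Bigl(\liminf_{n\to\infty}\b_{f_n}\Bigr)\,\r(z,w).
\]
Because $\{\b_{f_n}\}$ is assumed bounded, the constant $L:=\liminf_{n\to\infty}\b_{f_n}$ is finite, so $f$ is Lipschitz from $(D,\r)$ to $(\BigC,|\cdot|)$ with constant at most $L$.

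Now I would invoke Theorem~\ref{tlip} in the reverse direction: since $f$ is holomorphic on $D$ and Lipschitz with respect to the Bergman distance, $f$ is Bloch and
\[
\b_f=\sup_{z\ne w}\frac{|f(z)-f(w)|}{\r(z,w)}\le L=\liminf_{n\to\infty}\b_{f_n},
\]
which is exactly the desired inequality. The lower semi-continuity of $f\mapsto \b_f$ on $\cal{B}$ under uniform convergence on compact subsets is then an immediate restatement.

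There is no real obstacle here beyond ensuring that Theorem~\ref{tlip} can be applied to functions that are only known a priori to be holomorphic and locally uniform limits; the uniform Lipschitz bound coming from the boundedness of $\{\b_{f_n}\}$ is precisely what converts the Lipschitz estimate for $f$ into Blochness of $f$ via the converse implication of Theorem~\ref{tlip}. No finer regularity of the convergence (such as $C^1$ convergence of derivatives) is needed, which is the reason the Lipschitz viewpoint is the natural tool for this statement.
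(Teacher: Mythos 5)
Your proposal is correct and follows essentially the same route as the paper: both use Theorem~\ref{tlip} to convert the uniform bound on $\b_{f_n}$ into a Lipschitz estimate that passes to the pointwise limit, and then apply Theorem~\ref{tlip} again to conclude $f$ is Bloch with $\b_f\le\liminf_n\b_{f_n}$. Your direct liminf computation is a slightly cleaner packaging of the paper's subsequence-and-$\varepsilon$ argument (and it also dispenses with the paper's separate treatment of the case $\liminf_n\b_{f_n}=0$), but the substance is identical.
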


\begin{proof}  Since $\{\b_{f_n}\}$ is bounded, then $C =
\liminf_{n \to \infty} \b_{f_n}$ exists and is non-negative. Also,
there exists a subsequence $\{\b_{f_{n_k}}\}$ which converges to
$C$.  If $C = 0$, then $f$ is constant and the result follows at once.
So, we assume $C > 0$.

Let $z,w\in D$ and fix $\varepsilon>0$. Choose a positive integer $\n$ so that $|f_{n_k}(z)-f(z)|<\varepsilon/2$, $|f_{n_k}(w)-f(w)|<\varepsilon/2$, and $\b_{f_{n_k}}<C+\varepsilon$, for all $k\ge \n$. Then
$$|f(z) - f(w)| < \varepsilon + \b_{f_{n_k}}\r(z,w) < \varepsilon(1+\r(z,w)) +
C\r(z,w).$$
Letting $\varepsilon\to 0$, we obtain $\left|f(z) - f(w)\right| \leq C\r(z,w)$. Using Theorem~\ref{tlip}, we deduce that $f$ is Bloch and $\b_f \leq C$. \end{proof}

\section{Bloch constant of a bounded symmetric domain}

For a bounded symmetric domain $D$ define the {\bf Bloch constant of $D$} as
$$c_D=\sup\{\b_f: f\in H(D,\BigD)\}$$ and the {\bf inner radius of $D$} as
$$r_D=\inf_{u\in \partial D'}H_0(u,\overline{u})^{1/2},$$ where $D'$ is a bounded symmetric domain in standard form biholomorphically equivalent to $D$. By Theorem~2 of \cite{CC}, if $D$ is a Cartan classical domain, then
\begin{equation} c_D=\frac1{r_D}=\begin{cases} \sqrt{2/(m+n)}&\hbox{ if }D\in R_I,\\
\sqrt{2/(n+1)}&\hbox{ if }D\in R_{II},\\
\sqrt{1/(n-1)}&\hbox{ if }D\in R_{III},\label{maine}\\
\sqrt{2/n}&\hbox{ if }D\in R_{IV}.\end{cases}\end{equation}
In particular, if $D$ is the unit polydisk, then $c_D=1$. If $D=\B^n$, then
$c_D=\sqrt{2/(n+1)}$.

In \cite{ZG}, formula (\ref{maine}) was extended to include the case of the exceptional domains:  $$c_D=\frac1{r_D}=\begin{cases}1/\sqrt{6}&\hbox{ if }D= R_V,\\
1/3&\hbox{ if }D= R_{VI}.\nonumber\end{cases}$$

Furthermore, by Theorem~3 of \cite{CC} extended to include the exceptional domains, if $D=D_1\times \cdots\times D_k$ is in standard form, then $c_D=\max_{1\le j\le k}c_{D_j}$.

In the following proposition, for each of the four types of Cartan classical domains, we give explicit functions with range contained in $\BigD$ which are extremal in the sense that their semi-norms are equal to the Bergman constants of the domains. The proof of the following lemma can be found in \cite{CC} (Lemma~2 and Lemma 3).

\begin{lemma}\label{zless}{\normalfont{\cite{CC}}} {\normalfont{(a)}} Let $Z$ be a matrix such that $I-ZZ^*$ is positive definite. Then each row of $Z$ has norm less than 1.

{\normalfont{(b)}} If $D\in R_{IV}$, then for each $z=(z_1,\dots,z_n)\in D$ and each distinct $r,s\in \{1,\dots,n\}$, $z_r\pm iz_s\in \BigD$.
\end{lemma}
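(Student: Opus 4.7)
The plan is to handle the two parts separately, as they concern quite different domains.

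For part (a), I would test positive definiteness of $I - ZZ^*$ against standard basis vectors. For any index $i$, taking $v = e_i$ in the positive definiteness condition gives
$$0 < e_i^*(I - ZZ^*)e_i = 1 - (ZZ^*)_{ii} = 1 - \sum_j |Z_{ij}|^2,$$
which is exactly the statement that row $i$ of $Z$ has Euclidean norm strictly less than $1$. So part (a) reduces to a one-line computation.

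For part (b), the plan is to first recast the defining conditions of $R_{IV}$ into a more geometrically transparent form, then apply a Lagrange-type identity to extract information about a single pair of coordinates. Setting $\alpha = \sum_{j=1}^n z_j^2$, the inequalities $1 - 2\|z\|^2 + |\alpha|^2 > 0$ and $|\alpha| < 1$ are together equivalent (by isolating the radical and squaring) to the Lie ball description
$$\|z\|^2 + \sqrt{\|z\|^4 - |\alpha|^2} < 1.$$

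Next, I would apply Lagrange's identity to the vectors $z$ and $\overline{z}$. Since $\sum_j z_j \overline{\overline{z_j}} = \sum_j z_j^2 = \alpha$ and $z_j \overline{\overline{z_k}} - z_k \overline{\overline{z_j}} = z_j \overline{z_k} - z_k \overline{z_j} = 2i\,\hbox{Im}(z_j \overline{z_k})$, Lagrange's identity yields
$$\|z\|^4 - |\alpha|^2 = \sum_{j<k} \bigl|z_j \overline{z_k} - z_k \overline{z_j}\bigr|^2 = 4 \sum_{j<k}\bigl(\hbox{Im}(z_j \overline{z_k})\bigr)^2.$$
Dropping all terms but one gives $\sqrt{\|z\|^4 - |\alpha|^2} \geq 2|\hbox{Im}(z_r \overline{z_s})|$ for any distinct $r,s$. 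Combined with the Lie ball inequality this produces
$$\|z\|^2 + 2|\hbox{Im}(z_r \overline{z_s})| < 1.$$
A direct expansion gives $|z_r \pm iz_s|^2 = |z_r|^2 + |z_s|^2 \pm 2\,\hbox{Im}(z_r \overline{z_s}) \leq \|z\|^2 + 2|\hbox{Im}(z_r \overline{z_s})| < 1$, which is the desired conclusion.

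The main obstacle is bridging the global condition defining $R_{IV}$, which couples all coordinates through $\|z\|^2$ and $\alpha$, with a sharp bound involving only the pair $(z_r, z_s)$. Reformulating the domain as the Lie ball makes the geometry transparent, and Lagrange's identity then provides exactly the pointwise comparison needed, since it isolates precisely the imaginary parts $\hbox{Im}(z_j\overline{z_k})$ that appear when one expands $|z_r \pm iz_s|^2$.
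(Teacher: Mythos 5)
Both parts are correct in substance, and since the paper does not prove this lemma but simply cites \cite{CC} (Lemmas 2 and 3), your argument has to stand on its own --- and it essentially does. Part (a) is the standard test of positive definiteness against $e_i$, giving $1-\sum_j|Z_{ij}|^2>0$, i.e.\ the $i$-th row has norm less than $1$. For part (b), the equivalence of the two defining inequalities of $R_{IV}$ with the Lie-ball form $\|z\|^2+\sqrt{\|z\|^4-|\alpha|^2}<1$ checks out in both directions (in the forward direction one needs $\|z\|^2<1$ first, which follows from $2\|z\|^2<1+|\alpha|^2<2$), and the final chain $|z_r\pm iz_s|^2\le\|z\|^2+2|\mathrm{Im}(z_r\overline{z_s})|<1$ is fine. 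The one step you should rewrite is the invocation of Lagrange's identity. As stated, applying the Hermitian form of the identity to the pair $(z,\overline{z})$ produces cross terms $z_j\overline{\overline{z_k}}-z_k\overline{\overline{z_j}}=z_jz_k-z_kz_j=0$; your simplification $\overline{\overline{z_k}}=\overline{z_k}$ is false, and the Hermitian version would force the absurd conclusion $\|z\|^4=|\alpha|^2$. What you actually need --- and what your displayed formula correctly asserts --- is the \emph{bilinear} complex Lagrange identity applied to $z$ with itself,
\begin{equation*}
\Bigl(\sum_j|z_j|^2\Bigr)^2-\Bigl|\sum_j z_j^2\Bigr|^2=\sum_{j<k}\bigl|z_j\overline{z_k}-z_k\overline{z_j}\bigr|^2=4\sum_{j<k}\bigl(\mathrm{Im}(z_j\overline{z_k})\bigr)^2,
\end{equation*}
which is verified by direct expansion: the cross term of $\bigl|\sum_j z_j^2\bigr|^2$ is $2\mathrm{Re}\bigl((z_j\overline{z_k})^2\bigr)$, matching the cross term of $\bigl|z_j\overline{z_k}-z_k\overline{z_j}\bigr|^2$. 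With that correction the proof of (b) is complete: discard all summands except the $(r,s)$ one, combine with the Lie-ball inequality, and expand $|z_r\pm iz_s|^2$.
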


\begin{proposition}\label{proj} {\normalfont{(a)}} If $D$ is a Cartan classical domain in $R_I$, $R_{II}$, or $R_{III}$, then the canonical projections $p_j(z)=z_j$ $(\hbox{for }j=1,\dots,{\rm{dim}}(D))$ map $D$ into  $\BigD$ and have Bloch semi-norm equal to $c_D$.

{\normalfont{(b)}} If $D\in R_{IV}$, then for each pair of distinct indices $r,s\in\{1,\dots,n \}$, for $j=1,2$, the functions $p^{j}_{r,s}$ defined on $D$ by $p^{1}_{r,s}(z)=z_r+iz_s$ and  $p^{2}_{r,s}(z)=z_r-iz_s$ have range contained in $\BigD$ and semi-norm equal to $c_D$.
\end{proposition}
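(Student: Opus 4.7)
The argument splits into a range inclusion and a semi-norm computation. For range inclusion, part (a) is immediate from Lemma~\ref{zless}(a) (each entry of any $Z\in D$ has modulus less than one), and part (b) is exactly Lemma~\ref{zless}(b). Since each canonical projection thus maps $D$ into $\BigD$, the upper bound $\b_{p}\le c_D$ follows at once from the definition $c_D=\sup\{\b_f:f\in H(D,\BigD)\}$, applied to $p=p_j$ or $p=p^j_{r,s}$ respectively.

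For the reverse inequality $\b_p\ge c_D$, my plan is to show $Q_p(0)=c_D$ by exhibiting an extremal tangent vector $U\ne 0$ realizing the ratio $|(\nabla p)(0)U|/H_0(U,\overline{U})^{1/2}=c_D$; this suffices since $\b_p\ge Q_p(0)$. From the formulas in Section~2, at the origin the Bergman metric specializes to $H_0(U,\overline{U})=\k\,\mathrm{Trace}(UU^*)$ with $\k=(m+n)/2,(n+1)/2,(n-1)/2$ for $R_I,R_{II},R_{III}$ respectively, restricted to the appropriate tangent space; and $H_0(u,\overline{u})=n\|u\|^2$ for $R_{IV}$. Since the gradient of a coordinate projection picks out a single coordinate functional, the computation of $Q_p(0)$ reduces in each case to a one-line extremization.

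Carrying out the extremizations: for $R_I$ the choice $U=E_{ij}$ (a matrix with a single $1$ at entry $(i,j)$) gives $\mathrm{Trace}(UU^*)=1$ and hence $Q_{p_{ij}}(0)=\sqrt{2/(m+n)}$. For $R_{III}$ the skew-symmetry constraint forces $U=E_{ij}-E_{ji}$, yielding $\mathrm{Trace}(UU^*)=2$ and $Q_{p_{ij}}(0)=1/\sqrt{n-1}$. For $R_{IV}$ and $p^1_{r,s}(z)=z_r+iz_s$, the choice $u_r=1$, $u_s=-i$ (all other coordinates zero) yields $|u_r+iu_s|^2=4$ against $\|u\|^2=2$, so $Q_{p^1_{r,s}}(0)=\sqrt{2/n}$; the case $p^2_{r,s}$ is symmetric. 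In each case the value matches $c_D$ from~(\ref{maine}), giving $\b_p\ge c_D$ and, combined with the upper bound, equality.

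The main technical point is the $R_{II}$ case, where the tangent space is the space of complex symmetric matrices. A diagonal projection $p_{ii}$ is handled by $U=E_{ii}$, which is symmetric and gives $Q_{p_{ii}}(0)=\sqrt{2/(n+1)}=c_D$ directly. For an off-diagonal projection $p_{ij}$ with $i\ne j$, the symmetry constraint $u_{ij}=u_{ji}$ couples the coordinates and makes the extremization more delicate: one must locate the correct extremal symmetric tangent vector (potentially by invoking the isotropy action $Z\mapsto AZA^T$ of the unitary group, which acts transitively on tangent directions of fixed Bergman length) so that the ratio $c_D$ is still attained. This is the only non-routine step; once it is carried out, the three-step template above completes both parts of the proposition.
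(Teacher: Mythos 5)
Your overall template---range containment from Lemma~\ref{zless}, the upper bound $\b_p\le c_D$ straight from the definition of $c_D$ as a supremum over $H(D,\BigD)$, and the lower bound by evaluating $Q_p(0)$ on one explicit tangent vector---is exactly the paper's argument, and your extremal vectors for $R_I$, $R_{III}$, $R_{IV}$ and for the diagonal projections of $R_{II}$ coincide, up to scaling, with the ones used there. The one step you leave open, the off-diagonal projections $p_{hk}$ ($h\ne k$) of a domain in $R_{II}$, is a genuine gap, and the repair you sketch would not close it. The tangent space of $R_{II}$ at $0$ is the space of \emph{symmetric} matrices, and the isotropy action $Z\mapsto AZA^T$ ($A$ unitary) preserves the full list of singular values, not just the Bergman length; it therefore cannot carry the rank-one direction $E_{hh}$ to the rank-two direction $(E_{hk}+E_{kh})/\sqrt2$, so there is no transitivity to invoke. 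Worse, carrying out the extremization honestly over symmetric $U$ (the constraint $u_{hk}=u_{kh}$ forces $\hbox{Trace}(UU^*)\ge 2|u_{hk}|^2$) gives $Q_{p_{hk}}(0)=1/\sqrt{n+1}$, strictly smaller than $c_D=\sqrt{2/(n+1)}$. No choice of tangent vector at the origin can therefore settle this case.

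You should also know that the paper's own proof handles this point only formally: it identifies the coordinate vector $e_j$ with the matrix $E_{h,k}$ and computes $\hbox{Trace}(E_{h,k}E_{h,k}^*)=1$, i.e., it evaluates the metric on a non-symmetric matrix, which is not a tangent vector of $R_{II}$ when $h\ne k$. Under the isotropy-invariant trace normalization of the Bergman metric that the paper writes down (the only normalization at $0$ compatible with invariance under $Z\mapsto AZA^T$), the origin computation gives $1/\sqrt{n+1}$, and an explicit calculation for $n=2$ indicates that $\sup_ZQ_{p_{hk}}(Z)$ is also $1/\sqrt{n+1}$, attained at $Z=0$. So your instinct that the off-diagonal $R_{II}$ case is the delicate one is correct, but it is more than non-routine: as your proposal stands it is unproved, the isotropy argument cannot prove it, and the discrepancy appears to reflect an issue in the statement (or in the metric convention) rather than a missing trick. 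The cases $R_I$, $R_{III}$, $R_{IV}$, and the $R_{II}$ diagonal projections are complete and correct as you have them.
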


For easier reference, we shall refer to the functions in (b) as {\bf modified projection maps}.

\begin{proof} (a) The projections $p_j$ map $D$ into $\BigD$ by part (a) of Lemma~\ref{zless}. Represent the elements of $D\in R_I\cup R_{II}\cup R_{III}$ as vectors by considering the free variables in their matrix representation arranged rowwise. If $Z\in D$ with $D\in R_I$, then each entry of $Z$ is a free variable and thus $Z=(z_{1,1},z_{1,2},\dots,z_{1,n}z_{2,1},z_{2,2},\dots,z_{m,n})$. If $j\in \{1,\dots,mn\}$, let $q$ and $r$ be the unique integers such that $j=nq+r$ with $0\le q<m$ and $0\le r<n$ or $q=m$ and $r=0$. Define $e_j$ to be the unit vector whose $j$th coordinate is 1 and all others are 0, which corresponds to the matrix $E_{q,r}$ whose $(q,r)$-entry is 1 and all other entries are 0. Then $$H_0(e_j,\overline{e_j})=\left(\frac{m+n}2\right)\hbox{Trace}(E_{q,r}E_{q,r}^*)=\frac{m+n}2.$$ Furthermore, $|\nabla p_j(0)e_j|=1$ so that $Q_{p_j}(0)=\sqrt{\frac2{m+n}}=c_D$.

If $Z\in D$ with $D\in R_{II}$, then the free variables are the entries on and above the main diagonal. Thus, for $j\in\{1,\dots,n(n+1)/2\}$ and the appropriate choice of indices $h$ and $k$ corresponding to $j$, we have $$H_0(e_j,\overline{e_j})=\left(\frac{n+1}2\right)\hbox{Trace}(E_{h,k}E_{h,k}^*)=\frac{n+1}2.$$ Furthermore, $|\nabla p_j(0)e_j|=1$ so that $Q_{p_j}(0)=\sqrt{\frac2{n+1}}=c_D$.

If $Z\in D$ with $D\in R_{III}$, then the free variables are the entries above the main diagonal. For $j\in\{1,\dots,n(n-1)/2\}$ and the appropriate choice of indices $h$ and $k$ corresponding to $j$, let $X_{h,k}=E_{h,k}-E_{k,h}\in \partial D$. Then  $$H_0(e_j,\overline{e_j})=\left(\frac{n-1}2\right)\hbox{Trace}(X_{h,k}X_{h,k}^*)=n-1.$$ Furthermore, $|\nabla p_j(0)e_j|=1$ so that $Q_{p_j}(0)=\frac1{\sqrt{n-1}}=c_D$.

(b) Let $D\in R_{IV}$. The range of the modified projection maps $p^1_{r,s}$ and $p^2_{r,s}$ is contained in $\BigD$ by part (b) of Lemma~\ref{zless}. For $j=1,2$, let $e^j_{r,s}$ be the vector in $\BigC^n$ that has $1/2$ in the $r$-th place, $(-1)^{j} i/2$ in the $s$-th place, and 0 elsewhere. Thus $e^j_{r,s}\in \partial D$ and $H_0(e^j_{r,s},\overline{e}^j_{r,s})=n/2$. Moreover, $\nabla p^j_{r,s}(0)$ is the vector that has 1 in the $r$-th place, $(-1)^{j+1} i$ in the $s$-th place, and 0 elsewhere, so $|\nabla p^j_{r,s}(0)e^j_{r,s}|=1$ and thus $Q_{p^j_{r,s}}(0)=\sqrt{\frac2{n}}=c_D$, completing the proof.
\end{proof}

\begin{corollary}\label{corpo} If $D=D_1\times\dots\times D_k$ is a bounded symmetric domain, with $D_1,\dots,D_k$ Cartan classical domains, then the projection maps or the modified projection maps for the case of a factor in $R_{IV}$ of $D$ onto $\BigD$ have semi-norms $c_{D_1},\dots,c_{D_k}$ in this order, repeated as many times as the dimensions of the factors $D_1,\dots,D_k$.
\end{corollary}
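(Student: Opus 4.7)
The plan is to reduce the semi-norm computation on the product $D=D_1\times\cdots\times D_k$ to the single-factor calculations already carried out in Proposition~\ref{proj}. The crucial ingredient is the fact that the Bergman metric on a bounded symmetric domain in standard form decomposes as an orthogonal direct sum over its irreducible factors: writing $z=(z^{(1)},\dots,z^{(k)})$ with $z^{(j)}\in D_j$ and decomposing tangent vectors analogously as $u=(u^{(1)},\dots,u^{(k)})$, one has
$$H^D_z(u,\overline{u})=\sum_{j=1}^k H^{D_j}_{z^{(j)}}(u^{(j)},\overline{u^{(j)}}).$$
This is a standard consequence of the product structure of the Bergman kernel together with the automorphism invariance characterizing $H^D$.

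Granting this decomposition, fix an index $\ell\in\{1,\dots,k\}$ and let $p$ be a canonical projection (respectively, a modified projection, in the case $D_\ell\in R_{IV}$) associated with the factor $D_\ell$. Viewed as a function on $D$, the value $p(z)$ depends only on $z^{(\ell)}$, so $(\nabla p)(z)u=(\nabla\tilde p)(z^{(\ell)})u^{(\ell)}$, where $\tilde p:D_\ell\to\BigD$ is the corresponding single-factor map from Proposition~\ref{proj}. In the supremum defining $Q_p(z)$, the numerator therefore depends only on $u^{(\ell)}$, while the denominator, as a sum of non-negative Hermitian forms, is minimized for fixed $u^{(\ell)}\ne 0$ by taking $u^{(j)}=0$ for every $j\ne\ell$. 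This gives
$$Q_p(z)=\sup_{u^{(\ell)}\ne 0}\frac{|(\nabla\tilde p)(z^{(\ell)})u^{(\ell)}|}{H^{D_\ell}_{z^{(\ell)}}(u^{(\ell)},\overline{u^{(\ell)}})^{1/2}}=Q^{D_\ell}_{\tilde p}(z^{(\ell)}).$$
Taking the supremum over $z\in D$ is equivalent to taking the supremum over $z^{(\ell)}\in D_\ell$, and invoking Proposition~\ref{proj} yields $\b_p=c_{D_\ell}$.

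Running this argument for every factor $D_\ell$ and every projection (or modified projection) associated with $D_\ell$ supplied by Proposition~\ref{proj} produces the asserted list of functions in $H(D,\BigD)$: enumerating the factors in the given order $D_1,\dots,D_k$ and listing for each factor the full collection of associated projections, one obtains the semi-norm values $c_{D_1},\dots,c_{D_k}$ repeated with multiplicities equal to the corresponding dimensions, as required. The argument is essentially bookkeeping once the direct-sum decomposition of $H^D$ is at hand; the one mildly delicate point is verifying that decomposition, but this is standard and can be quoted from any reference on the Bergman metric of a product domain, so I do not expect it to present a genuine obstacle.
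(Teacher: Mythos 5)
Your proposal is correct and is essentially the paper's own argument: the paper's proof is the single observation that the Bergman metric of the product is the direct sum of the Bergman metrics of the factors, so the result follows from Proposition~\ref{proj}. You have merely written out the bookkeeping (the numerator of $Q_p$ depends only on $u^{(\ell)}$ while the denominator is minimized by setting the other blocks to zero) that the paper leaves implicit.
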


\begin{proof} Since the Bergman metric of $D_1\times\dots\times D_k$ is the direct sum of the Bergman metrics of the individual factors, the result follows at once from Proposition~\ref{proj}. \end{proof}

In Section 6, we shall prove that if $D=D_1\times\dots\times D_k$, where each $D_j$ is a Cartan classical domain, and $C_\f$ is an isometry on $\cal{B}$, then the components $\f_j=p_j\circ \f$ or the functions $\f_r+i\f_s$ and $\f_r-i\f_s$ (which we call {\bf modified components}) must have Bloch semi-norms equal to $c_{D_1},\dots,c_{D_k}$, respectively, repeated according to the dimension of the factors.

\section{The holomorphic self-maps on a bounded homogeneous domain}

A bounded homogeneous domain $D$ considered as a Riemannian manifold under the structure induced by the Bergman metric is a length space. Therefore, given $\f\in H(D)$ viewed as a map between Riemannian manifolds, we may apply Property 1.8 bis of \cite{G}. We obtain the following interpretation of the Bergman constant.

\begin{proposition}\label{psame} Any function $\f\in H(D)$ is Lipschitz as a mapping from $(D,\r)$ into itself and the Bergman constant $B_\f$ is the Lipschitz number of $\f$. That is
$$B_\f=\sup_{z\ne w}\frac{\r(\f(z),\f(w))}{\r(z,w)}.$$
\end{proposition}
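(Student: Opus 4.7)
The plan is to mirror exactly the strategy used for Theorem~\ref{tlip}: identify $B_\varphi$ as the supremum of the operator norms of the differentials $d\varphi(z)$ with respect to the Bergman inner products on the tangent spaces, and then invoke Gromov's Property 1.8 bis from \cite{G} to convert this to the global Lipschitz number. The domain $D$ endowed with the Bergman metric is a length space, and $\varphi$ is smooth, so the hypotheses of Gromov's result are met.

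More concretely, I would first observe that if we view $T_z D$ and $T_{\varphi(z)} D$ as complex Euclidean spaces equipped respectively with the Hermitian forms $H_z$ and $H_{\varphi(z)}$, the differential $d\varphi(z)$ is the linear map $u \mapsto J\varphi(z)u$. Its operator norm is by definition
$$\|d\varphi(z)\| = \sup_{u \in \BigC^n \setminus \{0\}} \frac{H_{\varphi(z)}(J\varphi(z)u, \overline{J\varphi(z)u})^{1/2}}{H_z(u, \overline{u})^{1/2}}.$$
Taking the supremum over $z \in D$ produces precisely the Bergman constant $B_\varphi$ as defined in Section 2. The preceding discussion (citing Timoney) already guarantees that $B_\varphi$ is finite, which is what allows $\varphi$ to be Lipschitz at all.

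Applying Property 1.8 bis of \cite{G} to $\varphi:(D,\rho)\to(D,\rho)$ then gives
$$\mathrm{dil}(\varphi) = \sup_{z \in D} \|d\varphi(z)\| = B_\varphi,$$
which is the asserted identity. I do not expect any serious obstacles here; the one point worth verifying carefully is that the distance $\rho$ on the target is the same Bergman distance used on the source, so that Gromov's theorem applies symmetrically. This is immediate since $\varphi$ maps $D$ into $D$ and both copies of $D$ carry the same Bergman metric; the identification of the operator norm of $d\varphi(z)$ with the pointwise dilation at $z$ is simply the definition of operator norm between the two Hermitian inner product spaces. One could also give a self-contained argument bypassing Gromov: the upper bound $\rho(\varphi(z),\varphi(w)) \le B_\varphi \rho(z,w)$ follows by integrating $\|d\varphi\|$ along a minimizing (or near-minimizing) geodesic joining $z$ to $w$, while the lower bound $B_\varphi \le \mathrm{dil}(\varphi)$ comes from letting $w\to z$ along a Bergman-geodesic issuing from $z$ in the direction of an almost-extremal unit tangent vector $u$.
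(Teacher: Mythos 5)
Your proposal matches the paper's argument: the paper likewise observes that $(D,\r)$ is a length space, views $\f$ as a smooth map between Riemannian manifolds whose differential at $z$ has operator norm equal to the pointwise Bergman dilation, and applies Property 1.8 bis of \cite{G} to identify $B_\f$ with the Lipschitz number. The extra self-contained integration/limit argument you sketch is a fine alternative but is not needed.
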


Using Proposition~\ref{psame} and the invariance of the Bergman distance under the action of an automorphism of $D$, we obtain

\begin{corollary}\label{pres} Let $D$ be a bounded homogeneous domain and let $\f\in H(D)$. Then for all $S\in{\rm{Aut}}(D)$, $B_{\f\circ S}=B_\f=B_{S\circ \f}$.
\end{corollary}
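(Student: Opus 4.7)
The plan is to use Proposition~\ref{psame} together with the fact that automorphisms of $D$ act as isometries of the Bergman distance $\r$. Recall that this isometry property follows from the defining invariance (\ref{equa}) of $H_z$ under automorphisms, since the Bergman distance is defined as the infimum of lengths of curves measured via $H_z$; alternatively, it is a standard consequence of the Bergman metric being Aut$(D)$-invariant. So I will take as granted that for every $S\in\operatorname{Aut}(D)$ and every $z,w\in D$, $\r(S(z),S(w))=\r(z,w)$.

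First I would handle $B_{S\circ\f}$. By Proposition~\ref{psame},
$$B_{S\circ \f}=\sup_{z\ne w}\frac{\r(S(\f(z)),S(\f(w)))}{\r(z,w)}=\sup_{z\ne w}\frac{\r(\f(z),\f(w))}{\r(z,w)}=B_\f,$$
where the middle equality uses that $S$ preserves $\r$. This disposes of postcomposition.

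Next, for $B_{\f\circ S}$, I would make the change of variables $z'=S(z)$, $w'=S(w)$. Because $S$ is a bijection of $D$ onto itself, the pair $(z,w)$ with $z\ne w$ traverses exactly the same set as $(z',w')$ with $z'\ne w'$. Applying Proposition~\ref{psame} and the isometry property $\r(z,w)=\r(S(z),S(w))=\r(z',w')$, I get
$$B_{\f\circ S}=\sup_{z\ne w}\frac{\r(\f(S(z)),\f(S(w)))}{\r(z,w)}=\sup_{z'\ne w'}\frac{\r(\f(z'),\f(w'))}{\r(z',w')}=B_\f.$$

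There is no real obstacle here; the corollary is essentially a one-line consequence of the Lipschitz-number interpretation of $B_\f$ given by Proposition~\ref{psame} once one records the isometric action of $\operatorname{Aut}(D)$ on $(D,\r)$. The only point worth stating explicitly is the bijectivity of $S$, which ensures that the supremum over $(z,w)$ is unchanged under the substitution $z\mapsto S(z)$, $w\mapsto S(w)$.
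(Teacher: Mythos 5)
Your proof is correct and follows exactly the route the paper intends: Proposition~\ref{psame} plus the isometric action of $\operatorname{Aut}(D)$ on $(D,\rho)$, with the change of variables justified by bijectivity of $S$. The paper states the corollary as an immediate consequence of these same two facts without writing out the details you supply.
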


The proof of the following result is analogous to that of Theorem~\ref{semicont} where the Euclidean distance is replaced by the Bergman distance.

\begin{corollary}\label{semcont} Let $D$ be a bounded homogeneous domain. If $\{\f_n\}$ is a sequence in $H(D)$ converging to a function $\f\in H(D)$ uniformly on compact subsets of $D$, then $B_\f\le \liminf_{n\to\infty}B_{\f_n}$.
\end{corollary}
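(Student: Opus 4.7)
The plan is to mirror the argument given for Theorem~\ref{semicont}, swapping the Euclidean distance on the target with the Bergman distance $\r$ on $D$, and using Proposition~\ref{psame} in place of Theorem~\ref{tlip} as the key characterization of $B_\f$ as a Lipschitz number. First I would set $C=\liminf_{n\to\infty}B_{\f_n}$; the case $C=+\infty$ is trivial, so I assume $C$ is finite, and extract a subsequence $\{\f_{n_k}\}$ with $B_{\f_{n_k}}\to C$.

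Next, I fix $z,w\in D$ with $z\ne w$ and $\e>0$, and exploit a triangle inequality in $(D,\r)$:
$$\r(\f(z),\f(w))\le \r(\f(z),\f_{n_k}(z))+\r(\f_{n_k}(z),\f_{n_k}(w))+\r(\f_{n_k}(w),\f(w)).$$
The middle term is at most $B_{\f_{n_k}}\r(z,w)<(C+\e)\r(z,w)$ for $k$ sufficiently large, by definition of the subsequence. For the two outer terms, I need $\r(\f_{n_k}(z),\f(z))\to 0$ and $\r(\f_{n_k}(w),\f(w))\to 0$; I can then choose $k$ large enough that each is less than $\e/2$, which yields $\r(\f(z),\f(w))<\e(1+\r(z,w))+C\r(z,w)$. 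Letting $\e\to 0$ gives $\r(\f(z),\f(w))\le C\r(z,w)$, and taking the supremum over $z\ne w$ via Proposition~\ref{psame} produces the desired inequality $B_\f\le C$.

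The main obstacle, and the only step where this proof is not a verbatim transcription of the proof of Theorem~\ref{semicont}, is converting the hypothesized Euclidean uniform convergence on compact subsets into the Bergman-distance convergence $\r(\f_{n_k}(z),\f(z))\to 0$ used above. This is handled by the standard fact that on a bounded domain in $\BigC^n$ the distance $\r$ induced by the (smooth) Bergman metric generates the same topology as the Euclidean metric: any Euclidean neighborhood of a point $\f(z)\in D$ contains a $\r$-ball around $\f(z)$ and vice versa. Since $\{z\}$ and $\{w\}$ are compact and both $\f_{n_k}(z)\to \f(z)$ and $\f_{n_k}(w)\to\f(w)$ in the Euclidean sense, the corresponding Bergman distances tend to zero, and the argument goes through.
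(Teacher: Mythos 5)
Your proof is correct and is exactly the argument the paper intends: the paper itself only remarks that the proof is ``analogous to that of Theorem~\ref{semicont} where the Euclidean distance is replaced by the Bergman distance,'' and your adaptation via Proposition~\ref{psame} and the triangle inequality in $(D,\r)$ is that argument carried out in full. The one point you rightly flag and resolve --- that locally uniform Euclidean convergence gives $\r(\f_{n_k}(z),\f(z))\to 0$ because the Bergman metric induces the Euclidean topology on $D$ --- is precisely the detail the paper leaves implicit, and your treatment of it is sound.
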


\begin{theorem}\label{lsuf} Let $D$ be a bounded homogeneous domain and let $\f\in H(D)$ be such that $\f(0)=0$ and suppose there exists a sequence $\{S_j\}$ in $\hbox{Aut}(D)$ such that $\{\f\circ S_j\}$ converges locally uniformly in $D$ to the identity. If the Bergman constant of $\f$ does not exceed $1$, then $C_\f$ is an isometry on $\cal{B}$.
\end{theorem}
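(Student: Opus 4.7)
The plan is to prove that for every $f \in \cal{B}$, $\|f \circ \f\|_{\cal{B}} = \|f\|_{\cal{B}}$. Since $\f(0) = 0$ gives $|f(\f(0))| = |f(0)|$, the desired identity reduces to the semi-norm equality $\b_{f \circ \f} = \b_f$, which I would establish by proving two matching inequalities.

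The upper bound $\b_{f \circ \f} \le \b_f$ is immediate from (\ref{semibdd}): the hypothesis $B_\f \le 1$ yields $\b_{f \circ \f} \le B_\f \b_f \le \b_f$.

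For the reverse bound I would invoke the approximating sequence $\{S_j\}$ in $\hbox{Aut}(D)$ together with the lower-semicontinuity result of Theorem~\ref{semicont}. The key observation is that since each $S_j$ is an automorphism, right-composition by $S_j$ preserves the Bloch semi-norm (as noted immediately after (\ref{equa})), so $\b_{f \circ \f \circ S_j} = \b_{f \circ \f}$ for every $j$. Meanwhile, $\f \circ S_j \to \hbox{id}$ locally uniformly on $D$, and a short argument using the uniform continuity of $f$ on compact subsets converts this into $f \circ \f \circ S_j \to f$ locally uniformly on $D$. Because the sequence $\{\b_{f \circ \f \circ S_j}\}$ is constant and thus bounded, Theorem~\ref{semicont} applies and yields
$$\b_f \;\le\; \liminf_{j \to \infty} \b_{f \circ \f \circ S_j} \;=\; \b_{f \circ \f}.$$

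Combined with the upper bound, this gives $\b_{f \circ \f} = \b_f$, hence $\|C_\f f\|_{\cal{B}} = \|f\|_{\cal{B}}$, as required. I do not foresee a serious obstacle: the three ingredients—the assumption $B_\f \le 1$, the invariance of $\b$ under right-composition by automorphisms, and the lower semi-continuity of $\b$—fit together directly, mirroring the structure of the implication (g)$\Rightarrow$isometry in Theorem~\ref{cmain}. The only point needing a brief justification is passing from $\f \circ S_j \to \hbox{id}$ to $f \circ \f \circ S_j \to f$ locally uniformly, which is standard once one observes that the image $(\f \circ S_j)(K)$ of a compact $K \subset D$ eventually lies in a slightly enlarged compact subset of $D$ on which $f$ is uniformly continuous.
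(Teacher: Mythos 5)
Your proposal is correct and follows essentially the same route as the paper's proof: the upper bound via $\b_{f\circ\f}\le B_\f\b_f\le\b_f$ from (\ref{semibdd}), and the lower bound by combining the invariance of the semi-norm under right-composition by automorphisms with the lower semi-continuity of Theorem~\ref{semicont} applied to $f\circ\f\circ S_j\to f$. The only cosmetic difference is that the paper first pins down $B_\f=1$ exactly (via Corollaries~\ref{pres} and~\ref{semcont}), whereas you work directly with $B_\f\le 1$, which suffices.
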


\begin{proof} First observe that, by Corollary~\ref{semcont} and Corollary~\ref{pres}, $1\le B_{\f\circ S_j}=B_\f$. Thus, from the hypothesis it follows that $B_\f=1$. If $f\in\cal{B}$, then $f\circ\f\circ S_n\to f$ locally uniformly in $D$, so by Theorem~\ref{semicont} and the invariance of the Bloch semi-norm under composition of automorphisms, we obtain
$$\b_f\le \liminf_{n\to\infty} \b_{f\circ \f\circ S_n}=\b_{f\circ \f}\le B_\f\b_f=\b_f.$$
Thus, since $\f(0)=0$, $\|f\circ \f\|_{\cal{B}}=|f(0)|+\b_{f\circ \f}=|f(0)|+\b_f=\|f\|_{\cal{B}}$.
\end{proof}

In \cite{Ko}, Kor\'anyi proved the following version of the Schwarz lemma for bounded symmetric domains.

\begin{theorem} \label{koranyi} {\normalfont{\cite{Ko}}} Let $D$ be a bounded symmetric domain of rank $\ell$ and let $\f\in H(D)$. Then for all $z,w\in D$, $\r(\f(z),\f(w))\le \ell^{1/2}\r(z,w),$
where $\r$ is the Bergman distance on $D$. The constant $\ell^{1/2}$ is the best possible.
\end{theorem}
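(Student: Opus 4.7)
The plan is to obtain the inequality as a Schwarz--Pick contraction estimate by combining the non-increasing behavior of the Carath\'eodory pseudodistance under holomorphic maps with the comparison between the Bergman and Carath\'eodory distances supplied by the Wolf--Harish-Chandra polydisk theorem. First, by homogeneity of $D$ and $\hbox{Aut}(D)$-invariance of $\r$, pick $S,T\in\hbox{Aut}(D)$ with $S(0)=z$ and $T(\f(z))=0$ and set $g=T\circ\f\circ S$. Then $g(0)=0$ and the problem reduces to proving $\r(0,g(u))\le\ell^{1/2}\r(0,u)$ for every $g\in H(D)$ fixing $0$ and every $u\in D$.

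Introduce the Carath\'eodory pseudodistance $c_D(x,y)=\sup\{\r_{\BigD}(h(x),h(y)):h\in H(D,\BigD)\}$, which is decreasing under holomorphic self-maps and, by Theorem~\ref{tlip} applied to each $h\in H(D,\BigD)$, satisfies $c_D\le\r$. The reverse-direction comparison $\r\le\ell^{1/2}c_D$ is the key structural input: for every $u\in D$ the polydisk theorem provides a totally geodesic holomorphic embedding $\i:\BigD^\ell\hookrightarrow D$ with $\i(0)=0$, $u=\i(u^\circ)$ for some $u^\circ\in\BigD^\ell$, which is an isometry for both $\r$ and $c_D$ (the latter using Lempert's theorem on the polydisk). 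On the polydisk the product Bergman metric yields
$$\r_{\BigD^\ell}(0,u^\circ)=\left(\sum_{j=1}^\ell \r_{\BigD}(0,u_j^\circ)^2\right)^{1/2}\le \ell^{1/2}\max_j\r_{\BigD}(0,u_j^\circ)=\ell^{1/2}c_{\BigD^\ell}(0,u^\circ),$$
and transporting by $\i$ gives $\r(0,u)\le\ell^{1/2}c_D(0,u)$ on all of $D$. Chaining,
$$\r(0,g(u))\le\ell^{1/2}c_D(0,g(u))\le\ell^{1/2}c_D(0,u)\le\ell^{1/2}\r(0,u),$$
which is the desired bound.

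For sharpness, on $\BigD^\ell$ take $\f(z)=(z_1,\dots,z_1)$ and $w_r=(r,0,\dots,0)$ with $0<r<1$: the product Bergman metric gives $\r(0,\f(w_r))=\ell^{1/2}\r_{\BigD}(0,r)=\ell^{1/2}\r(0,w_r)$, so $\ell^{1/2}$ cannot be lowered. For a general rank-$\ell$ bounded symmetric domain $D$, transport this example via a totally geodesic polydisk embedding together with the linear holomorphic retraction onto its image in the Harish-Chandra realization (coming from the Peirce decomposition of the Jordan-triple structure of $D$) to manufacture a self-map of $D$ with the same extremal ratio. The main obstacle is the structural input $\r\le\ell^{1/2}c_D$ on all of $D$: this rests on the polydisk theorem and on Lempert-type identifications of Carath\'eodory and Kobayashi distances on bounded symmetric domains, which is where the depth of the argument sits. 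Once those are granted, the remainder is the three-line Schwarz--Pick chain above.
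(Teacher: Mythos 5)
The paper does not prove this statement; it is quoted from Kor\'anyi \cite{Ko}, so there is no in-paper argument to compare yours with. Your strategy --- sandwiching the Bergman distance between multiples of the Carath\'eodory distance via the polydisk theorem and then contracting the Carath\'eodory distance under $\f$ --- is a legitimate route, close in spirit to Kor\'anyi's original argument. But as written the two halves of your sandwich are mutually inconsistent, and one of them is false under the normalization of the Bergman metric used in this paper (indeed under any single fixed normalization, except for the polydisk itself). The totally geodesic embedding $\i:\BigD^\ell\hookrightarrow D$ is \emph{not} an isometry onto its image when $\BigD^\ell$ carries the unweighted product metric $\sum_j |u_j|^2/(1-|z_j|^2)^2$: the restriction of $H^D$ to the maximal polydisk is $r_D^2\sum_j |u_j|^2/(1-|z_j|^2)^2$, with $r_D$ the inner radius of Section~4. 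Concretely, for $D\in R_I$ with $m=n$ (rank $\ell=n$) and $Z=\operatorname{diag}(z_1,\dots,z_n)$, $U=\operatorname{diag}(u_1,\dots,u_n)$, one computes $H_Z(U,\overline{U})=n\sum_j|u_j|^2/(1-|z_j|^2)^2$, so $\r(0,\operatorname{diag}(r,\dots,r))=n\,\r_{\BigD}(0,r)$, while the analytic disk $\z\mapsto \z I_n$ shows the Carath\'eodory distance of that pair is at most $\r_{\BigD}(0,r)$. Hence your key inequality $\r\le\ell^{1/2}c_D$ already fails for $m=n=\ell=2$ (the ratio is $2$, not $\sqrt{2}$). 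The correct sandwich is $r_D\,c_D\le\r\le r_D\,\ell^{1/2}\,c_D$; the two constants multiply to $\ell^{1/2}$, so your three-line chain still yields the theorem once the factor $r_D$ is inserted on both sides, but each intermediate inequality as stated is wrong in general.

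Second, your justification of $c_D\le\r$ ``by Theorem~\ref{tlip}'' does not work even for the corrected inequality $r_D\,c_D\le\r$. Theorem~\ref{tlip} bounds the Euclidean increment $|h(x)-h(y)|$ by $\b_h\,\r(x,y)$, whereas the Carath\'eodory distance involves the Poincar\'e distance $\r_{\BigD}(h(x),h(y))$, which is not dominated by $|h(x)-h(y)|$ near $\partial\BigD$. What is actually needed is that for every $h\in H(D,\BigD)$ and every disk automorphism $\s$ with $\s(h(z))=0$ one has $Q_{\s\circ h}(z)\le 1/r_D$, i.e., precisely the Bloch constant computation (\ref{maine}) recalled in Section~4 --- a genuine structural input, not a consequence of the Lipschitz characterization. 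Likewise the lower bound $c_{\BigD^\ell}(0,u^\circ)\le c_D(0,u)$ silently uses that the coordinate projections of the maximal polydisk are dominated by elements of $H(D,\BigD)$; your Peirce-decomposition retraction supplies this, but it belongs in the main argument rather than only in the sharpness discussion. With these repairs (and noting that your reduction to $g(0)=0$ is never actually used), the proof goes through, and your sharpness example --- the diagonal map on the maximal polydisk composed with the retraction --- is correct.
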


The rank of $D$ is roughly the dimension of the largest polydisk that can be embedded in the tangent space at each point of $D$, considered as a complex manifold under the Bergman metric. In particular, if $D=\BigD^n$, then $\ell=n$ so that $\max_{\f\in H(D)}B_\f= \sqrt{n}$. If $D$ is the unit ball, $\ell=1$ so that $\max_{\f\in H(D)}B_\f= 1$.  Thus, in the statement of Theorem~\ref{lsuf} for the case $D=\BigB_n$, the hypothesis on $B_\f$ is unnecessary.

Theorem~\ref{lsuf} allows us to obtain non-trivial examples of isometric composition operators if $D$ is a bounded homogeneous domain having $\BigD$ as a factor.

\begin{example} Let $D=D_1\times \BigD$, where
$D_1$ is a bounded homogeneous domain containing 0. Let $\phi\in
H(\BigD)$ be such that $\phi(0)=0$, $\b_\phi=1$, and suppose $\phi$
is not a rotation. Let $U\in {\rm{Aut}}(D_1)$ such that $U(0)=0$.
For $z\in D_1$ and $\z\in\BigD$, define
$\f(z,\zeta)=(U(z),\phi(\zeta))\in D$. By the equivalence of (a) and
(g) in Theorem~\ref{cmain}, there exists a sequence $\{T_j\}$ in
$\hbox{Aut}(\BigD)$ such that $\phi\circ T_j(\z)\to \z$ locally
uniformly in $\BigD$. Then the sequence $\{S_j\}$ defined by
$S_j(z,\zeta)=(U^{-1}(z),T_j(\zeta))$ is in $\hbox{Aut}(D)$ and
$\{\f\circ S_j\}$ converges locally uniformly to the identity in
$D$.  Since the Bergman constants of $U$ and $\phi$ are both 1, then
so is the Bergman constant of $\f$.  By Theorem~\ref{lsuf}, $C_\f$
is an isometry on $\cal{B}$.\end{example}

\section{Some necessary conditions}

In this section, we derive some necessary conditions for $\f\in H(D)$ to be the symbol of an isometric composition operator on the Bloch space when $D$ is a bounded symmetric domain in standard form with no exceptional factors. These conditions are analogous to those found in \cite{CC3} in the special case of the unit polydisk for domains that do not have factors in $R_{IV}$. A modification is needed when dealing with domains that contain factors in $R_{IV}$. The key tool in the proof of  Theorem~\ref{neccond} below is the existence of an automorphism interchanging any prescribed point of $D$ with 0.

\begin{lemma}\label{exinv} Let $D$ be a bounded symmetric domain in standard form. For each $a\in D$, there exists and involution $\psi_a\in \hbox{Aut}(D)$ such that $\psi_a(a) = 0$.
\end{lemma}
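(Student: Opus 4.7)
My plan is to obtain $\psi_a$ as the symmetry of $D$ at the midpoint of the Bergman geodesic from $0$ to $a$. Since $D$ is a bounded symmetric domain, the Bergman metric makes $D$ into a Hermitian symmetric space of non-compact type, hence a complete, simply connected Riemannian manifold on which any two points are joined by a unique geodesic. I would therefore let $\gamma:[0,1]\to D$ be the Bergman geodesic from $0$ to $a$ and set $m=\gamma(1/2)$.

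The hypothesis that $D$ is symmetric supplies an involution $s_m\in\Aut(D)$ having $m$ as an isolated fixed point. Because $s_m$ is an isometry of the Bergman metric fixing $m$ with no other fixed points nearby, its differential $ds_m(m):T_mD\to T_mD$ is an involutive linear isometry whose $+1$-eigenspace is trivial (otherwise the exponential of that eigenspace would yield a positive-dimensional fixed-point submanifold through $m$); hence $ds_m(m)=-I$. Since geodesics are determined by their initial point and initial velocity, $s_m$ reverses every geodesic through $m$, giving $s_m\circ\gamma(t)=\gamma(1-t)$. Evaluating at $t=1$ yields $s_m(a)=\gamma(0)=0$, and $\psi_a:=s_m$ is the desired involution.

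The hard part will be justifying the two symmetric-space facts invoked here: Bergman-completeness of $D$ together with existence and uniqueness of geodesic connections, and the identification of $ds_m(m)$ with $-I$. Both are classical in the theory of Hermitian symmetric spaces of non-compact type. If one prefers to avoid that machinery, a case-by-case alternative is available in standard form: it suffices to write $D=D_1\times\cdots\times D_k$ with each $D_j$ irreducible, produce an involution $\psi_{a_j}^{(j)}\in\Aut(D_j)$ with $\psi_{a_j}^{(j)}(a_j)=0$ on each factor, and take $\psi_a(z_1,\dots,z_k)=(\psi_{a_1}^{(1)}(z_1),\dots,\psi_{a_k}^{(k)}(z_k))$. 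For each Cartan classical factor an explicit formula is at hand; for instance on $R_I$ one may take $\psi_A(Z)=(I_m-AA^*)^{-1/2}(A-Z)(I_n-A^*Z)^{-1}(I_n-A^*A)^{1/2}$, with analogous expressions for $R_{II}$, $R_{III}$, and $R_{IV}$, specializing to the M\"obius involutions $\varphi_a$ on $\BigB_n$ defined in (\ref{fia}). For the exceptional factors the midpoint argument is the cleanest route.
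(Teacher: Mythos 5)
Your proof is correct, and it is essentially the argument the paper relies on: the paper gives no proof of its own, citing only Helgason pp.\ 170, 301, 311, and your midpoint construction (take the geodesic symmetry $s_m$ at the Bergman midpoint $m$ of $0$ and $a$, note $ds_m(m)=-I$ because $m$ is an isolated fixed point, hence $s_m$ reverses the geodesic and swaps $0$ with $a$) is precisely the classical transitivity argument underlying those citations. The only cosmetic remark is that uniqueness of the connecting geodesic is not needed --- completeness and Hopf--Rinow already give a minimizing geodesic whose midpoint you can use.
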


The proof is an immediate consequence of the results in \cite{He}, pp. 170, 301, 311.

\begin{theorem}\label{neccond} Let $D=D_1\times\dots\times D_k$ be a bounded symmetric domain in standard form. Let $\f=(\f_1,\dots,\f_n)\in H(D)$ such that $C_\f$ is an isometry on $\cal{B}$. Then:
\begin{enumerate}
\item[{\normalfont{(a)}}] The components $\f_1,\dots,\f_n$ are linearly independent.

\item[{\normalfont{(b)}}] If $D$ does not contain any exceptional factors, then $\f(0)=0$.

\item[{\normalfont{(c)}}] If none of the factors of $D$ is in $R_{IV}$, then the components of $\f$ have semi-norms equal to $c_{D_1},\dots,c_{D_k}$, repeated according to the dimension of each factor.

\item[{\normalfont{(d)}}] If $D$ has factors in $R_{IV}$, then for each $D_\ell\in R_{IV}$ and each pair $r,s$ of distinct indices, with
$\sum_{i=1}^{\ell-1}\hbox{dim}(D_i)<r,s\le \sum_{i=1}^{\ell}\hbox{dim}(D_i)$, the modified components $\f_r+i\f_s,\f_r-i\f_s\in H(D,\BigD)$ and have semi-norm equal to $c_{D_\ell}$.
\end{enumerate}
\end{theorem}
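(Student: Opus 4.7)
The plan is to test the isometry identity $\|f\|_{\cal{B}}=\|f\circ \f\|_{\cal{B}}$ against carefully chosen one-parameter families of Bloch test functions built from the (modified) projection maps supplied by Proposition~\ref{proj}, and to exploit the fact that adding a complex constant to a function preserves its Bloch semi-norm while shifting its value at $0$.

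For (a), I will argue by contradiction: if $\sum_{j=1}^n c_j\f_j\equiv 0$ with $(c_1,\dots,c_n)\ne 0$, I plug into the isometry the linear function $f(z)=\sum_j c_j z_j$. This $f$ is Bloch (it is bounded on $D$, hence Bloch), satisfies $f(0)=0$, and has $\b_f>0$ since $f$ is non-constant. But $f\circ \f\equiv 0$ forces $\|f\circ \f\|_{\cal{B}}=0$, whereas $\|f\|_{\cal{B}}=\b_f>0$, contradicting the isometry.

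To obtain (b), (c), and (d) simultaneously, I will set $a=\f(0)$ and, for each coordinate $j$ of $D$, feed into the isometry a one-parameter test family. When $j$ lies in a factor $D_\ell\in R_I\cup R_{II}\cup R_{III}$, I use $f_c:=p_j+c$ for $c\in\BigC$; when $j$ lies in a factor $D_\ell\in R_{IV}$, I use $f_c:=p^k_{r,s}+c$ for $k=1,2$ and each pair of distinct indices $r,s$ in $D_\ell$. A constant shift does not change the semi-norm, so Proposition~\ref{proj} gives $\b_{f_c}=c_{D_\ell}$ in either case. Equating $\|f_c\|_{\cal{B}}$ and $\|f_c\circ \f\|_{\cal{B}}$ forces, for every $c\in\BigC$, an identity of the form
\[
|a_j+c|-|c|=c_{D_\ell}-\b_{\f_j}
\]
(with $a_j$ replaced by $a_r\pm ia_s$ and $\f_j$ by $\f_r\pm i\f_s$ in the $R_{IV}$ case). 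The right-hand side is independent of $c$, but the left-hand side attains $+|a_j|$ at $c=0$ and $-|a_j|$ as $c$ is pushed to infinity along the ray opposite to $a_j$; hence $a_j=0$ and $\b_{\f_j}=c_{D_\ell}$. In the $R_{IV}$ case the vanishing of both $a_r+ia_s$ and $a_r-ia_s$ yields $a_r=a_s=0$, together with $\b_{\f_r\pm i\f_s}=c_{D_\ell}$. Collected over all coordinates, these facts deliver (c), (d), and (under the hypothesis of no exceptional factors) (b).

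The main obstacle is to produce test functions whose Bloch semi-norms are both known and equal to the factor-specific constants $c_{D_\ell}$; this is exactly the content of Proposition~\ref{proj}, after which the remaining argument reduces to the triangle-inequality observation $-|a_j|\le |a_j+c|-|c|\le |a_j|$. The hypothesis ``no exceptional factors'' in (b) is needed precisely because Proposition~\ref{proj} does not supply such test functions on $R_V$ or $R_{VI}$, so coordinates of $\f(0)$ living in an exceptional factor cannot be reached by this method.
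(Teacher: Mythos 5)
Your argument is correct, and for parts (b)--(d) it takes a genuinely different route from the paper. Part (a) is essentially the paper's own proof (a bounded, hence Bloch, nonconstant linear test function annihilated by $C_\f$). For (b)--(d) the paper composes the (modified) projections with the involution $\psi_a\in\Aut(D)$ interchanging $a=\f(0)$ and $0$ (Lemma~\ref{exinv}), uses automorphism-invariance of the semi-norm to get $\b_{p_j\circ\psi_a}=c_{D_\ell}$, and then compares Bloch norms; you instead use the one-parameter family of constant translates $p_j+c$ (resp.\ $p^k_{r,s}+c$), whose semi-norm is still $c_{D_\ell}$ since adding a constant does not change the gradient --- note that the statement you actually need here on the product domain is Corollary~\ref{corpo} rather than Proposition~\ref{proj} itself. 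Equating $\|p_j+c\|_{\cal{B}}=|c|+c_{D_\ell}$ with $\|\f_j+c\|_{\cal{B}}=|a_j+c|+\b_{\f_j}$ and letting $c$ vary is a clean device: it yields $\b_{\f_j}=c_{D_\ell}-|a_j|$ at $c=0$ and $\b_{\f_j}=c_{D_\ell}+|a_j|$ for $c$ far out along the ray opposite $a_j$, which forces $a_j=0$ and $\b_{\f_j}=c_{D_\ell}$ simultaneously. This is more elementary than the paper's argument (no need for the existence of the involution $\psi_a$) and it derives both the upper and the lower bound on $\b_{\f_j}$ directly from the isometry identity, whereas the paper separately invokes the estimate $\b_{\psi_j\circ\f}\le c_{D_\ell}$. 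The only bookkeeping point worth making explicit is that a factor $D_\ell\in R_{IV}$ has dimension at least $2$ (indeed at least $5$ with the paper's normalization), so a pair of distinct indices $r,s$ is always available and the two equations $a_r+ia_s=0$ and $a_r-ia_s=0$ do force $a_r=a_s=0$.
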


\begin{proof} To prove (a), assume there exist $k=1,\dots,n$ and $\a_j\in\BigC$ for $j\ne k$ such that $\f_k=\sum_{j\ne k}\a_j \f_j$. Then the function $f$ defined by $f(z)=z_k-\sum_{j\ne k}\a_jz_j$ is bounded on $D$ (and hence a Bloch function) and has nonzero Bloch norm, but $f\circ \f$ is identically zero. Thus $C_\f$ is not an isometry on $\cal{B}$.

To prove (b), assume each of the factors of $D$ is a Cartan
classical domain. Set $\f(0)=a$ and write $a=(a_1,\dots, a_n)$, with
each $a_j\in\C$. Fix $j=1,\dots,n$. By Lemma~\ref{exinv}, we may
consider $\psi_j(z)=p_j\circ \psi_a$, where $p_j$ is the projection
map onto the $j$-th coordinate and $\psi_a$ is an automorphism of
$D$ mapping $a$ to 0 and 0 to $a$. Then, by the invariance of the
semi-norm under composition of automorphisms of $D$ and by
Corollary~\ref{corpo}, if $D_\ell\notin R_{IV}$ and
$\hbox{dim}(D_1\times\dots\times
D_{\ell-1})<j\le\hbox{dim}(D_1\times\dots\times D_{\ell})$, then
$\b_{\psi_j}=\b_{p_j}=c_{D_\ell}$. On the other hand,
$\b_{\psi_j\circ \f}\le c_{D_\ell}$.  Since $\|\psi_j\circ
\f\|_{\cal{B}}=\|\psi_j\|_{\cal{B}}$, and $\psi_j(a)=0$, we deduce
$|a_j|+c_{D_\ell}=\b_{\psi_j\circ\f}\le c_{D_\ell}$. Hence $a_j=0$
and $\b_{\psi_j\circ\f}= c_{D_\ell}$.

 Next assume $D_\ell\in R_{IV}$ and $\hbox{dim}(D_1\times\dots\times D_{\ell-1})<r,s\le\hbox{dim}(D_1\times\dots\times D_{\ell})$, with $r\ne s$. Define $\psi^h_{r,s}=p^h_{r,s}\circ \psi_a$, where $p^h_{r,s}$ ($h=1,2$) are the modified projection maps defined in Proposition~\ref{proj}. Then $\b_{\psi^h_{r,s}\circ \f}\le c_{D_\ell}$, and since $\|\psi^h_{r,s}\circ \f\|_{\cal{B}}=\|\psi^h_{r,s}\|_{\cal{B}}$, and $\psi^h_{r,s}(a)=0$, by Corollary~\ref{corpo} we deduce
$$|a_r+(-1)^{h+1} ia_s|+c_{D_\ell}=\b_{\psi^h_{r,s}\circ\f}\le c_{D_\ell}.$$ Hence $a_r+(-1)^{h+1} ia_s=0$ for $h=1,2$, which implies that $a_r=a_s=0$. Consequently,  $\f(0)=0$.

Moreover, $\b_{\f_j}=\b_{p_j\circ \f}= \b_{p_j}=c_{D_\ell}$ if $D_\ell\notin R_{IV}$, while
$\b_{\f_r+(-1)^{h+1} i\f_s}=\b_{p^h_{r,s}\circ \f}= \b_{p^h_{r,s}}=c_{D_\ell}$ if $D_\ell\in R_{IV}$, proving (c) and (d).
\end{proof}

\begin{remark} We suspect that if $D$ is any bounded symmetric domain in standard form, then the condition $\f(0)=0$ holds, but we have not been able to verify it in the case an irreducible factor is an exceptional domain because our proof relies on our ability to find explicit functions mapping into the unit disk that are extremal with respect to the Bloch semi-norm and attain the semi-norm at 0 (these were the projection maps for domains in $R_I,R_{II},$ and $R_{III}$, and the modified projection maps for domains in $R_{IV}$). We have not been able to determine any functions defined on the exceptional domains that have these properties.
\end{remark}

Thus, we propose the following

\begin{conjecture}Let $\f\in H(D)$, where $D=D_1\times\dots \times D_k$ is a bounded symmetric domain in standard form. Then $C_\f$ is an isometry on $\cal{B}$ if and only if $\f(0)=0$, $B_\f=1$, and the components or the modified components of $\f$ have semi-norm equal to $c_{D_1},\dots,c_{D_k}$, repeated according to the dimension of each factor.\end{conjecture}


\section{The Spectrum of $C_\f$ in the Case of the Polydisk}

In this section, we study the spectrum of a large class of
isometric composition operators on the Bloch space of the unit
polydisk $\BigD^n$.  As a consequence, we obtain the
classification of the spectrum in the case of the unit disk found in \cite[Theorem 5.1]{AC}.

First, we recall pertinent facts of the spectrum of bounded linear
operators on a complex Banach space.  These results can all be found
in a standard text of functional analysis such as \cite{CON}.

A bounded linear operator $A$ on a complex Banach space $E$ is
invertible if and only if it has dense range and is \textit{bounded
below}, that is, there exists $c > 0$ such that for all $x \in E$,
$\norm{Ax}\geq c\norm{x}$. By the Inverse Mapping Theorem, for
invertibility it is sufficient for $A$ to be bijective.

The \textbf{resolvent} of a bounded linear operator $A$ on a complex
Banach space $E$ is defined as
$$\rho(A) = \{\lambda \in \C : A-\lambda I \text{ is invertible}\},$$ where $I$ is the identity operator.
The \textbf{spectrum} of $A$ is defined as $\sigma(A) =
\C\setminus\rho(A)$.  The \textbf{approximate point spectrum}, a
subset of the spectrum, is defined as
$$\sigma_{ap}(A) = \{\lambda \in \C : A-\lambda I \text{ is not
bounded below}\}.$$

The spectrum of an operator $A$ is a non-empty, compact subset of
$\C$ contained in the closed disk centered at the origin of radius
equal to the operator norm of $A$.
In particular, the
spectrum of an isometry is contained in $\overline{\BigD}$, since
the operator norm is 1.  Furthermore
$\partial\sigma(A) \subseteq \sigma_{ap}(A)$ (see \cite{CON}, Proposition~6.7).

To prove our results on the spectrum, we need the following
lemma.
\begin{lemma}\label{conway_lemma} Let $E$ be a complex Banach space and suppose
$T:E \to E$ is
an isometry.  If $T$ is invertible, then $\sigma(T) \subseteq
\partial\BigD$.  If $T$ is not invertible, then $\sigma(T) =
\overline{\BigD}$.\end{lemma}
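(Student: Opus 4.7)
The plan is to handle the two cases separately, using only facts recalled just above: $\sigma(T) \subseteq \overline{\BigD}$ because $\|T\| = 1$, and $\partial \sigma(T) \subseteq \sigma_{ap}(T)$.

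For the invertible case, I would first observe that $T^{-1}$ is itself an isometry, since $\|T^{-1}y\| = \|T T^{-1} y\| = \|y\|$ for every $y \in E$, and hence $\|T^{-1}\| = 1$. For any $\lambda$ with $|\lambda| < 1$ I would then factor
$$T - \lambda I = T(I - \lambda T^{-1}),$$
and note that $\|\lambda T^{-1}\| = |\lambda| < 1$, so the Neumann series furnishes an inverse for $I - \lambda T^{-1}$, and consequently for $T - \lambda I$. This places the open disk inside $\rho(T)$, and combined with $\sigma(T) \subseteq \overline{\BigD}$ forces $\sigma(T) \subseteq \partial \BigD$.

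For the non-invertible case the strategy is to combine an approximate point spectrum estimate with the connectedness of $\BigD$. For any isometry and any $\lambda$ with $|\lambda| \ne 1$, the triangle inequality yields
$$\|(T - \lambda I)x\| \ge \bigl|\|Tx\| - |\lambda|\,\|x\|\bigr| = \bigl|1 - |\lambda|\bigr|\,\|x\|,$$
so $T - \lambda I$ is bounded below; hence $\sigma_{ap}(T) \subseteq \partial \BigD$, and the inclusion $\partial \sigma(T) \subseteq \sigma_{ap}(T)$ gives $\partial \sigma(T) \cap \BigD = \emptyset$.

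I would then argue that $\sigma(T) \cap \BigD$ is simultaneously closed and open in $\BigD$: closed as the intersection of a closed set with $\BigD$, and open because any point $\lambda$ in it is not a boundary point of $\sigma(T)$, hence lies in the interior of $\sigma(T)$ in $\BigC$, and therefore admits a small neighborhood contained in $\BigD \cap \sigma(T)$. Connectedness of $\BigD$ forces this intersection to equal $\emptyset$ or $\BigD$. Since $T$ is not invertible, $0 \in \sigma(T)$, so the intersection is non-empty, hence equal to $\BigD$. Taking the closure and using $\sigma(T) \subseteq \overline{\BigD}$ concludes $\sigma(T) = \overline{\BigD}$. The main subtlety is this last clopen step, where one has to carefully convert ``$\lambda \notin \partial \sigma(T)$ as a subset of $\BigC$'' into relative openness inside $\BigD$; this goes through cleanly because every $\lambda \in \BigD$ has a whole Euclidean neighborhood contained in $\BigD$.
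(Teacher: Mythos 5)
Your proof is correct, and it diverges from the paper's in an instructive way. For the invertible case the paper invokes the Spectral Mapping Theorem applied to $z\mapsto z^{-1}$: since $T^{-1}$ is again an isometry, $\sigma(T^{-1})=\sigma(T)^{-1}\subseteq\overline{\BigD}$, which forces $\sigma(T)\subseteq\partial\BigD$. You instead prove directly that every $\lambda$ with $|\lambda|<1$ lies in $\rho(T)$ by factoring $T-\lambda I=T(I-\lambda T^{-1})$ and summing the Neumann series; this is more elementary and self-contained, trading a piece of holomorphic functional calculus for a geometric series. For the non-invertible case both arguments start identically, using the reverse triangle inequality to get $\sigma_{ap}(T)\subseteq\partial\BigD$ and then $\partial\sigma(T)\subseteq\sigma_{ap}(T)\subseteq\partial\BigD$; the difference is only in how the topology is exploited. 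The paper assumes a point $\lambda\in\overline{\BigD}\cap\rho(T)$ and walks along the radial segment from $0\in\sigma(T)$ to $\lambda$ to produce a boundary point of $\sigma(T)$ inside $\BigD$, a contradiction; you run a clopen argument showing $\sigma(T)\cap\BigD$ is both open and closed in the connected set $\BigD$ and nonempty (it contains $0$), hence all of $\BigD$, and then take closures. These are two packagings of the same connectedness idea, and your handling of the one delicate point --- converting ``not a boundary point of $\sigma(T)$'' into relative openness in $\BigD$ --- is done correctly since $\BigD$ is open in $\BigC$. Both proofs are complete; yours arguably uses lighter machinery in the first half.
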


\begin{proof} Suppose $T$ is an invertible isometry on $E$.  Then $0
\not\in \sigma(T)$, and so the function $z \mapsto z^{-1}$ is analytic
in some neighborhood of $\sigma(T)$.  By the Spectral Mapping
Theorem (see \cite{CON}, p. 204), we have $\sigma(f\circ T) = f(\sigma(T))$, and so
$$\sigma(T^{-1}) = \sigma(T)^{-1} = \{\lambda^{-1} : \lambda \in
\sigma(T)\}.$$  Since $T^{-1}$ exists and is an isometry, we have
$\sigma(T^{-1}) \subseteq \overline{\BigD}$.  Therefore $\sigma(T)
\subseteq \partial\BigD$.

Next, suppose $T$ is not invertible.  In order to prove that
$\sigma(T) = \overline{\BigD}$, it suffices to show that
$\overline{\BigD} \subseteq \sigma(T)$.  For $\lambda \in \BigD$,
$T-\lambda I$ is bounded below by $1-\modu{\lambda}$.  Thus,
$\lambda \not\in \sigma_{ap}(T)$.  We deduce that
$\partial\sigma(T)\subseteq \sigma_{ap}(T) \subseteq \partial\BigD$.

Since $T$ is not invertible, $0 \in \sigma(T)$.  Assume $\lambda \in
\overline{\BigD}\cap\rho(T)$.  Note that $\lambda \not\in
\partial\sigma(T)$ since $\partial\sigma(T) = \sigma(T) \cap
\overline{\rho(T)}$.  Consider $\Gamma = \{t\lambda : t \in
[0,\infty)\}$, the radial line through $\lambda$.  Since $\sigma(T)$
is closed, there exists $t \in [0,1)$ such that $t\lambda \in
\partial\sigma(T)$.  This contradicts the fact that
$\partial\sigma(T) \subseteq \partial\BigD$.  Consequently,
$\overline{\BigD}\cap\rho(T) = \emptyset$, whence
$\overline{\BigD} \subseteq \sigma(T)$. \end{proof}

We use Lemma~\ref{conway_lemma} to obtain a classification of the spectrum for a
large class of symbols of isometric composition operators for the polydisk.
We first describe the spectrum of isometric composition operators
with surjective, non-automorphic symbol.

\begin{proposition}\label{non-auto_onto} If $\varphi$ is the symbol of an isometric
composition operator such that $\varphi \not\in \Aut(\BigD^n)$ and
$\varphi$ is onto, then $\sigma(C_\varphi) =
\overline{\BigD}$.\end{proposition}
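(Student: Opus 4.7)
The plan is to apply Lemma~\ref{conway_lemma}. Since $C_\f$ is by hypothesis an isometry on $\cal{B}$, it is enough to show that $C_\f$ is \emph{not} invertible: Lemma~\ref{conway_lemma} will then yield $\sigma(C_\f)=\overline{\BigD}$ at once. I will prove non-invertibility by showing that $C_\f$ is not surjective.

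The first step is to observe that $\f$ cannot be injective. Any holomorphic bijection of a bounded domain in $\BigC^n$ onto itself is automatically a biholomorphism (its Jacobian cannot vanish and its inverse is holomorphic), and hence an automorphism; this is a standard fact from several complex variables. Since $\f$ is onto by hypothesis but $\f \notin \Aut(\BigD^n)$, $\f$ must therefore fail to be injective, so there exist distinct points $z_1,z_2 \in \BigD^n$ with $\f(z_1)=\f(z_2)$. Pick a coordinate index $k$ for which $(z_1)_k \ne (z_2)_k$; the coordinate projection $g(z)=z_k$ is bounded on $\BigD^n$, hence belongs to $\cal{B}$, yet $g(z_1) \ne g(z_2)$. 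Every element of the range of $C_\f$ has the form $f \circ \f$ and therefore takes equal values at $z_1$ and $z_2$; consequently $g \notin \operatorname{ran}(C_\f)$, and $C_\f$ is not surjective. Thus $C_\f$ is not invertible, and Lemma~\ref{conway_lemma} gives $\sigma(C_\f)=\overline{\BigD}$.

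I do not anticipate any real obstacle here. The only nontrivial external input is the fact that a holomorphic self-bijection of $\BigD^n$ is automatically biholomorphic, which can simply be cited. Everything else is a one-line observation once the two identified points $z_1,z_2$ are exhibited.
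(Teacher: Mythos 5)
Your proof is correct and follows essentially the same route as the paper's: both reduce to non-surjectivity of $C_\f$ via Lemma~\ref{conway_lemma}, use the standard fact that a holomorphic self-bijection of $\BigD^n$ is an automorphism to produce two distinct points identified by $\f$, and then observe that a coordinate projection (the paper uses $p_k - \eta_k$, you use $p_k$ directly) separates those points while every function in the range of $C_\f$ cannot. The only cosmetic difference is that the paper phrases the final step as a contradiction rather than a direct exhibition of a function outside the range.
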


\begin{proof} By Lemma \ref{conway_lemma}, it suffices to show that $0 \in
\sigma(C_\varphi)$. Since $C_\f$ is an isometry, it is necessarily
injective. Thus, we must prove that $C_\varphi$ is not surjective.
Arguing by contradiction, suppose $C_\varphi$ is surjective. Since
$\varphi$ is onto but not an automorphism, $\varphi$ is not 1-1.
Thus there exist distinct $\zeta,\eta \in \BigD^n$ such that $\varphi(\zeta) = \varphi(\eta)$.  In particular, there
exists $k=1,\dots,n$ such that $\zeta_k \neq \eta_k$.

  Let $p_k:\BigD^n \to \BigD$ be the
projection map onto the $k^{th}$ coordinate, and define the Bloch
function $g(z) = p_k(z) - \eta_k$.  Note that $g$ is non-vanishing
at $\zeta$.  Since $C_\varphi$ is surjective, there exists a Bloch
function $f$ such that $f\circ\varphi = g$.  In particular
$f(\varphi(\zeta)) = g(\zeta) \neq 0.$  On the other hand,
$f(\varphi(\zeta)) = f(\varphi(\eta)) = g(\eta) = 0,$ a
contradiction. Thus,
 $C_\varphi$ is not surjective.  Therefore $\sigma(C_\varphi) =
\overline{\BigD}$. \end{proof}

Observe that the class of symbols described in Proposition~\ref{non-auto_onto} includes those of the form $\varphi =(\varphi_1, \dots,\varphi_n)$ with each $\varphi_k$ dependent on a distinct single complex variable as described in
Theorem \ref{cmain}(f).

 Next, we consider the isometric
composition operators induced by a particular type of automorphic
symbol. Let $\varphi(z) = (\lambda_1z_1,\dots,\lambda_nz_n)$, with
$\modu{\lambda_j} = 1$ for $j = 1,\dots, n$.  We define
$\mathrm{ord}(\lambda_j)$ to be the smallest non-negative integer $k$ such that
$\lambda_j^k = 1$.  We have the following classification of the
spectrum of $C_\varphi$ for this class of automorphisms.

\begin{proposition}\label{polyspec}
Let $\varphi(z) = (\lambda_1z_1,\dots,\lambda_nz_n)$ such that
$\modu{\lambda_j} = 1$ for $j=1,\dots,n$.  Then
$$\sigma(C_\varphi) =
\begin{cases}
\partial\BigD,& \text{if } \mathrm{ord}(\lambda_j) = \infty \text{ for
some } $j$,\\
G,& \text{if } \mathrm{ord}(\lambda_j)<\infty
\text{ for all } $j$,
\end{cases}$$
where $G$ is the finite cyclic group generated
by $\lambda_1,\dots,\lambda_n$.
\end{proposition}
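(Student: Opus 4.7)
The plan is to exploit two facts: (i) $\varphi$ is an automorphism of $\BigD^n$ with inverse $\varphi^{-1}(z)=(\overline{\lambda_1}z_1,\dots,\overline{\lambda_n}z_n)$, so $C_\varphi$ is an invertible isometry on $\cal{B}$, and Lemma~\ref{conway_lemma} already gives $\sigma(C_\varphi)\subseteq\partial\BigD$; and (ii) every monomial $z^\alpha=z_1^{\alpha_1}\cdots z_n^{\alpha_n}$ ($\alpha\in\BigN^n$) is a bounded, hence Bloch, function satisfying $C_\varphi(z^\alpha)=\lambda^\alpha z^\alpha$, where $\lambda^\alpha:=\lambda_1^{\alpha_1}\cdots\lambda_n^{\alpha_n}$. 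Consequently the multiplicative semigroup $\Sigma:=\{\lambda^\alpha:\alpha\in\BigN^n\}$ is contained in the point spectrum $\sigma_p(C_\varphi)$, and a fortiori in $\sigma(C_\varphi)$.

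In the infinite-order case, pick $j_0$ with $\mathrm{ord}(\lambda_{j_0})=\infty$; writing $\lambda_{j_0}=e^{2\pi i\theta}$ with $\theta$ irrational, Weyl's equidistribution theorem (or the standard pigeonhole argument) shows that $\{\lambda_{j_0}^k:k\in\BigN\}$ is dense in $\partial\BigD$. Since this set lies in $\Sigma\subseteq\sigma(C_\varphi)$ and the spectrum is closed, taking closures yields $\partial\BigD\subseteq\sigma(C_\varphi)$, and combined with the inclusion from Lemma~\ref{conway_lemma} we conclude $\sigma(C_\varphi)=\partial\BigD$.

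In the finite-order case, let $G$ be the group generated by $\lambda_1,\dots,\lambda_n$. Every finite subgroup of $\BigC^\times$ is cyclic, so $G$ is cyclic of some order $N$, i.e.\ $G=\{w\in\BigC:w^N=1\}$. Lagrange's theorem gives $\lambda_j^N=1$ for each $j$, hence $\varphi^N$ is the identity on $\BigD^n$ and $C_\varphi^N=C_{\varphi^N}=I$. Thus $p(C_\varphi)=0$ for $p(w)=w^N-1$, and the spectral mapping theorem forces $\sigma(C_\varphi)\subseteq\{w:w^N=1\}=G$. For the reverse inclusion note that any element of $G$ can be written as $\lambda_1^{k_1}\cdots\lambda_n^{k_n}$ with $k_j\in\BigZ$, and using $\lambda_j^{-1}=\lambda_j^{m_j-1}$ (where $m_j=\mathrm{ord}(\lambda_j)$) we may take all $k_j\ge 0$; hence $G=\Sigma\subseteq\sigma_p(C_\varphi)\subseteq\sigma(C_\varphi)$, yielding $\sigma(C_\varphi)=G$.

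The argument is essentially an eigenvalue computation combined with two easy classical facts (density of an irrational orbit on the circle and the spectral mapping theorem for polynomials), so no deep obstacle is expected; the only point that deserves care is the observation that when all $\lambda_j$ have finite order the polynomial identity $C_\varphi^N=I$ forces $\sigma(C_\varphi)$ to be finite, which is what rules out spectrum strictly between the eigenvalues in $G$ and the ambient circle $\partial\BigD$.
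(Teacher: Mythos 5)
Your proof is correct, and most of it runs parallel to the paper's: both establish $\sigma(C_\varphi)\subseteq\partial\BigD$ from invertibility via Lemma~\ref{conway_lemma}, both exhibit the monomials $z_1^{k_1}\cdots z_n^{k_n}$ as eigenfunctions to get $G\subseteq\sigma(C_\varphi)$, and both use density of the orbit of an infinite-order $\lambda_j$ to settle the first case. Where you genuinely diverge is the upper bound $\sigma(C_\varphi)\subseteq G$ in the finite-order case. The paper fixes $\mu\notin G$ and shows $C_\varphi-\mu I$ is bijective by writing out, for each $g\in\cal{B}$, the cyclic linear system obtained from iterating $\varphi$ up to its order $\alpha$, computing the determinant $(-1)^\alpha(\mu^\alpha-1)\ne 0$, and then checking separately that the resulting $f$ is Bloch because it is a finite linear combination of the functions $g\circ\varphi^{j-1}$. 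You instead observe that $\lambda_j^N=1$ for all $j$ (Lagrange), hence $C_\varphi^N=C_{\varphi^N}=I$, and apply the spectral mapping theorem to $p(w)=w^N-1$ to get $p(\sigma(C_\varphi))=\sigma(0)=\{0\}$, i.e.\ $\sigma(C_\varphi)\subseteq\{w:w^N=1\}=G$ (using that a finite subgroup of the circle of order $N$ is exactly the group of $N$-th roots of unity). This is cleaner: it bypasses both the determinant computation and the need to verify that the solution of the system is Bloch, since the algebra happens at the operator level. The two arguments are really the same identity in disguise --- the paper's system is the explicit factorization $(C_\varphi-\mu I)^{-1}=(\mu^N-1)^{-1}q(C_\varphi)$ unwound pointwise --- but your packaging via $C_\varphi^N=I$ is shorter and makes transparent why the spectrum must be finite. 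One cosmetic remark: the paper takes $\alpha$ to be the order of $G$ (the lcm of the orders of the $\lambda_j$), which coincides with your $N$, so there is no discrepancy there.
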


\begin{proof}
The induced composition operator $C_\varphi$ is clearly invertible
since $\varphi^{-1}(z) =
(\lambda_1^{-1}z_1,\dots,\lambda_n^{-1}z_n)$ and $C_\varphi^{-1} =
C_{\varphi^{-1}}$.  So, by Lemma \ref{conway_lemma}, we have
$\sigma(C_\varphi) \subseteq \partial\BigD$.

Observe that the group $G$ generated by $\l_1,\dots,\l_n$ is $$\{\lambda_1^{k_1}\cdots\lambda_n^{k_n} : k_1,\dots,k_n
\in\mathbb{N}\cup\{0\}\} \subseteq \partial\BigD.$$ For
$k_1,\dots,k_n \in \mathbb{N}\cup\{0\}$, the function $f(z) =
z_1^{k_1}\dots z_n^{k_n}$ is Bloch and
$(f\circ\varphi)(z) = f(\lambda_1z_1,\dots,\lambda_nz_n) =
\lambda_1^{k_1}\cdots\lambda_n^{k_n}f(z)$.  Thus, $f$ is an
eigenfunction of $C_\varphi$ with eigenvalue
$\lambda_1^{k_1}\cdots\lambda_n^{k_n}$.  Thus,
$\overline{G} \subseteq \sigma(C_\varphi)$.

If there exists $j=1,\dots, n$ such that $\mathrm{ord}(\lambda_j) =
\infty$, then $G$ is a dense subset of $\partial\BigD$.  Thus,
$\partial\BigD = \overline{G} \subseteq \sigma(C_\varphi).$ and
hence $\sigma(C_\varphi) = \partial\BigD$.  If
$\mathrm{ord}(\lambda_j) < \infty$ for all $j=1,\dots,n$, then $G$
is a finite cyclic group of order equal to the least common multiple
of the orders of $\lambda_1,\dots,\lambda_n$. We wish to show that
$\sigma(C_\varphi) \subseteq G$.  Let $\mu \in
\partial\BigD\backslash G$. 
The invertibility of $C_\varphi -
\mu I$ is equivalent to showing that for all Bloch functions $g$,
there exists a unique Bloch function $f$ such that $C_\varphi(f) -
\mu f = g$.

If $\alpha$ is the order of $G$, then $\varphi^\alpha(z) :=
(\underbrace{\varphi\circ\cdots\circ\varphi}_{\text{$\alpha$-times}})(z)
= z$ for all $z \in \BigD^n$, so $C_\varphi - \mu I$ is invertible
if and only if for every Bloch function $g$ the following system has
a unique solution $f$ in the Bloch space:
\begin{equation}\label{spec-sys}
\begin{array}{lclcl}
f(\varphi(z)) & - & \mu f(z) & = & g(z) \\
f(\varphi^2(z)) & - & \mu f(\varphi(z)) & = & g(\varphi(z)) \\
& \vdots &  & $\vdots$ &  \\
f(z) & - & \mu f(\varphi^{\alpha-1}(z)) & = &
g(\varphi^{\alpha-1}(z)).
\end{array}
\end{equation}

Equivalently, (\ref{spec-sys}) can be posed as the matrix equation
$Ax = b$ where
$$A = \left[\begin{matrix}
    -\mu & 1 & 0 & 0 & \cdots & 0\\
    0 & -\mu & 1 & 0 & \cdots & 0\\
    \vdots & 0 & \ddots & \ddots & & \vdots\\
    \vdots & & \ddots & \ddots & \ddots & \vdots\\
    0 & & & \ddots & \ddots & 1\\
    1 & 0 & \cdots & \cdots & 0 & -\mu
    \end{matrix}\right], x = \left[\begin{matrix}
    f(z)\\f(\varphi(z))\\\vdots\\\vdots\\f(\varphi^{\alpha-2}(z))\\f(\varphi^{\alpha-1}(z))
    \end{matrix}\right], b =
    \left[\begin{matrix}
    g(z)\\g(\varphi(z))\\\vdots\\\vdots\\g(\varphi^{\alpha-2}(z))\\g(\varphi^{\alpha-1}(z))
    \end{matrix}\right].
$$
Direct calculation shows that $\mathrm{det}(A) =
(-1)^\alpha(\mu^\alpha-1)\ne 0$, since $\mu\notin G$. Thus,
$C_\varphi - \mu I$ is invertible. For $\mu \notin G$, the unique
solution $f$ of (\ref{spec-sys}) is a (finite) linear combination of
the functions $g\circ\varphi^{j-1}$, for $j=1,\dots,\a$, each of
which is Bloch. Thus $f$ is Bloch. Therefore $\sigma(C_\varphi) =
G$. \end{proof}

We now study the spectrum in the case when $\f$ is an automorphism that permutes the coordinates.

\begin{proposition}\label{perm} Let $\f(z)=(z_{\t(1)},\dots,z_{\t(n)})$, where $\t\in S_n$ is decomposed into a product of disjoint cycles $c_1,\dots,c_\ell$ of order $\a_1,\dots,\a_\ell$, respectively.
\begin{enumerate}
\item[{\normalfont{(a)}}] If $\l$ is an $\a_j$-th root of unity for some $j=1,\dots,\ell$, then $\l$ is an eigenvalue of $C_\f$.
\item[{\normalfont{(b)}}] The spectrum of $C_\f$ is the group of the $\a$-th roots of unity, where $\a$ is the order of $\t$.
\end{enumerate}
\end{proposition}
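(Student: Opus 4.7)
The plan is to handle the two parts separately: first produce an explicit family of eigenfunctions for part (a), then bootstrap to the full spectrum in part (b) via a Spectral Mapping argument plus a product construction across cycles.

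For (a), fix a cycle $c_j=(i_1,\dots,i_{\a_j})$, so that $\t(i_k)=i_{k+1}$ with indices taken modulo $\a_j$, and let $\l$ be an $\a_j$-th root of unity. My ansatz is the linear polynomial
$$f(z)=\sum_{k=0}^{\a_j-1}\l^{-k}\,z_{i_{k+1}},$$
which lies in $\cal{B}$ because it is bounded on $\BigD^n$ and is visibly nonzero. The identity $f\circ\f=\l f$ falls out of a shift of summation index, with the wrap-around $i_{\a_j+1}=i_1$ absorbed by the relation $\l^{\a_j}=1$. This step is routine.

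For (b), let $\a=\mathrm{lcm}(\a_1,\dots,\a_\ell)$, which is the order of $\t$. One inclusion is easy: since $\f^\a$ is the identity, $C_\f^\a=I$, and the Spectral Mapping Theorem forces $\sigma(C_\f)^\a=\{1\}$, placing $\sigma(C_\f)$ inside the group $G_\a$ of $\a$-th roots of unity. For the reverse inclusion I plan a product construction across cycles. Any $\m\in G_\a$ factors as $\m=\l_1\cdots\l_\ell$ with $\l_j$ an $\a_j$-th root of unity: the $G_{\a_j}$ are subgroups of the cyclic group $G_\a$, and their product subset coincides with the subgroup they generate, which is $G_\a$ itself by definition of least common multiple. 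Pick, via (a), an eigenfunction $f_j$ of $C_\f$ with eigenvalue $\l_j$ depending only on the coordinates of $c_j$. Because the cycles are pairwise disjoint, the product $f=f_1f_2\cdots f_\ell$ is a nonzero polynomial, hence a Bloch function, and a cycle-by-cycle calculation gives $f\circ\f=\m f$, so $\m$ is an eigenvalue of $C_\f$.

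The only genuinely nontrivial step is the group-theoretic factorization of elements of $G_\a$ as products of $\a_j$-th roots of unity, one per cycle; everything else is either a direct shift-of-index calculation or a standard functional-analytic observation. Once that factorization is in place, the two inclusions close up to give $\sigma(C_\f)=G_\a$, yielding both (a) and (b).
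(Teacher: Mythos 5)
Your proposal is correct, and it takes a genuinely different route from the paper's in part (b). For part (a) the two arguments are essentially the same idea in different clothing: the paper seeks a linear eigenfunction $f(z)=\sum x_jz_j$ and shows the associated matrix $B$ is singular because, after permuting to block-diagonal form, the block for the cycle $c_j$ has determinant $1-\l^{\a_j}=0$; you simply write down a nontrivial element of the kernel explicitly, namely $f(z)=\sum_{k}\l^{-k}z_{i_{k+1}}$, which is cleaner but equivalent. The real divergence is in (b). The paper proves the inclusion $\s(C_\f)\subseteq G_\a$ by exhibiting, for $\m$ not an $\a$-th root of unity, the inverse of $C_\f-\m I$ through the cyclic linear system built from $f,f\circ\f,\dots,f\circ\f^{\a-1}$, and proves the reverse inclusion by showing that for $\m\in G_\a$ the solvability of that system imposes an identity on $g$ that fails for a polynomial of large enough degree — so those $\m$ are in the spectrum but are not exhibited as eigenvalues. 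You instead get the forward inclusion in one line from $C_\f^\a=C_{\f^\a}=I$ and the Spectral Mapping Theorem (which the paper already invokes in Lemma~\ref{conway_lemma}, so this is fair game), and you get the reverse inclusion by factoring any $\m\in G_\a$ as $\l_1\cdots\l_\ell$ with $\l_j$ an $\a_j$-th root of unity (valid: the product of the subgroups $G_{\a_j}$ of the abelian group $G_\a$ is the subgroup they generate, which has order $\mathrm{lcm}(\a_1,\dots,\a_\ell)=\a$) and multiplying the corresponding eigenfunctions, which depend on disjoint sets of variables and whose product is therefore a nonzero polynomial, hence Bloch. This buys you two things the paper's proof does not deliver: you avoid the somewhat delicate verification that the solvability identity can actually be violated by a suitable $g$, and you obtain the strictly stronger conclusion that \emph{every} $\a$-th root of unity is an eigenvalue of $C_\f$ (e.g.\ a primitive $6$-th root of unity when $\a_1=2$, $\a_2=3$, which is covered by neither part (a) of the proposition nor the paper's eigenvalue construction). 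The trade-off is that the paper's system-of-equations method generalizes directly to the mixed case $\f(z)=(\l_1z_{\t(1)},\dots,\l_nz_{\t(n)})$ used in Theorem~\ref{spec-poly}, whereas your product construction would need to be re-examined there since the blocks no longer return exactly to the identity.
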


\begin{proof} To prove (a), assume $\l$ is an $\a_j$-th root of unity for some $j=1,\dots,\ell$. To prove that $\l$ is an eigenvalue we need to show that there exists a nonzero $f\in \cal{B}$ such that
\begin{equation} f(\f(z))=\l f(z)\hbox{ for all }z\in \BigD^n.\label{func} \end{equation} We are going to show that, in fact, there is a linear function $f$ satisfying this condition. Let $f(z)=\sum_{j=1}^n x_jz_j$, where the coefficients $x_j$ are to be determined. Equation (\ref{func}) is equivalent to a matrix equation $Bx=0$, where $x$ is the column vector with coordinates $x_1,\dots,x_n$ and $B$ is a matrix with all diagonal entries $-\l$ and whose rows (and columns) contain an entry 1 and all other entries 0. The matrix $B$ can be permuted to yield the block diagonal matrix
$$A=\begin{bmatrix}A_1&O&\dots&O\\
O&A_2&\dots&O\\
\vdots& &\ddots&\vdots\\
O&O&\dots&A_n\end{bmatrix}$$
where each matrix $A_k$ has order $\a_k\times \a_k$ and contains all diagonal entries 1, and each row contains (besides the entry 1) one entry $-\l$ and all other entries 0. The matrix $A_k$ corresponds to the cycle $c_k$. It is easy to see that $\det(A_j)=1-\l^{\a_j}=0$, so that $\det(A)=\prod_{k=1}^\ell \det(A_k)=0$. Thus $B$ itself is singular, and so $Bx=0$ has nontrivial solutions.

To prove (b), assume $\m$ is not an $\a$-th root of unity. Arguing as in the proof of Proposition~\ref{polyspec}, given a function $g\in \cal{B}$, we form the system (\ref{spec-sys}), whose coefficient matrix has nonzero determinant. Thus, for any function $g\in\cal{B}$ there exists a unique $f\in \cal{B}$ such that $f\circ\f-\m f=g$.

On the other hand, if $\m$ is  an $\a$-th root of unity, then by row reduction it is easy to see that the solvability of the system (\ref{spec-sys}) reduces to the condition
\begin{eqnarray} g(\f^{\a-2}(z))&+&\m g(\f^{\a-3}(z))+\m^2g(\f^{\a-4}(z))+\dots \nonumber\\
\dots&+&\m^{\a-3}g(\f(z))+\m^{\a-2}g(z)+\m^{\a-1}g(\f^{\a-1}(z))\equiv 0.\label{ideo}\end{eqnarray}
By choosing $g$ to be a polynomial of sufficiently large degree, it is possible to find one for which identity (\ref{ideo}) fails. Thus, $\m\in\s(C_\f)$.
\end{proof}

As observed in Section~2, the automorphisms of the polydisk that fix
the origin have the representation $\varphi(z) =
(\lambda_1z_{\tau(1)},\dots,\lambda_nz_{\tau(n)})$ where
$\modu{\lambda_j} = 1$ for all $j \in\{1,\dots,n\}$, and $\tau \in
S_n$.  The arguments used in the proofs of
Proposition~\ref{polyspec} and Proposition~\ref{perm} carry over to
this general case, where $\a$ is taken to be the least common
multiple of the orders of $\l_1,\dots,\l_n$, and $\t$. We deduce the
following result:

\begin{theorem}\label{spec-poly} Let $\varphi$ be the symbol of an isometric
composition operator on the Bloch space for the polydisk.
\begin{enumerate}
\item[{\normalfont{(a)}}] If $\varphi \not\in \Aut(\BigD^n)$ and $\varphi$ is onto, then
$\sigma(C_\varphi)=\overline{\BigD}$.

\item[{\normalfont{(b)}}] If $\varphi \in \Aut(\BigD^n)$, let $\l_1,\dots,\l_n\in \partial \BigD$ and let $\t\in S_n$ be such that $\varphi(z) = (\lambda_1z_{\t(1)},\dots,\lambda_nz_{\t(n)})$.
\begin{enumerate}
\item[{\normalfont{\noindent(i)}}] If at least one of the $\lambda_j$ has infinite order, then $\sigma(C_\varphi) =
\partial\BigD$.

\item[{\normalfont{\noindent(ii)}}] If each $\l_j$ has finite order, then $\s(C_\f)$ is the cyclic group $\G$ generated by $\l_1,\dots,\l_n$ and by the $m^{\rm{th}}$ roots of unity, where $m$ is the order of $\t$. Furthermore, each element of the group $G$ generated by $\l_1,\dots,\l_n$ is an eigenvalue.
\end{enumerate}
\end{enumerate}
\end{theorem}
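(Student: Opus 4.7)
My plan is to follow the paper's own hint: adapt the proofs of Propositions~\ref{polyspec} and~\ref{perm} with $\alpha$ now the least common multiple of the orders of $\lambda_1,\dots,\lambda_n$ and $\tau$.

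Part (a) is immediate from Proposition~\ref{non-auto_onto}, whose hypothesis is identical to ours. For Part (b), since $\varphi\in\Aut(\BigD^n)$ the operator $C_\varphi$ is invertible with inverse $C_{\varphi^{-1}}$, so the invertible case of Lemma~\ref{conway_lemma} gives $\sigma(C_\varphi)\subseteq\partial\BigD$. The key observation for eigenvalue production is that any monomial $f(z)=\prod_j z_j^{k_j}$ whose exponent tuple is constant on each $\tau$-orbit is a Bloch eigenfunction with eigenvalue $\prod_j \lambda_j^{k_j}$; this realizes every element of the subgroup $G=\langle\lambda_1,\dots,\lambda_n\rangle$ as an eigenvalue. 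Part (b)(i) then follows instantly: if some $\lambda_{j_0}$ has infinite order, choosing exponents supported on the $\tau$-orbit of $j_0$ and letting the common value grow yields an infinite cyclic family of eigenvalues, and the closedness of $\sigma(C_\varphi)\subseteq\partial\BigD$ forces equality.

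For (b)(ii), all $\lambda_j$ have finite order, so $\varphi^\alpha=\mathrm{id}$ and $C_\varphi^\alpha=I$; hence $\sigma(C_\varphi)$ is contained in the finite set of $\alpha$-th roots of unity. The monomial construction already exhibits $G$. To obtain the $m$-th roots of unity, and with them all of $\Gamma$, I would adapt the linear-eigenfunction argument of Proposition~\ref{perm}(a) to each $\tau$-cycle $O$ of length $p$: seek $f=\sum_{i\in O}c_i z_i$ with coefficients satisfying the recurrence $c_{\tau(i)}=\mu^{-1}\lambda_i c_i$ coming from $C_\varphi f=\mu f$; the cyclic closure condition reduces to $\mu^p=\prod_{i\in O}\lambda_i$, giving $p$ new eigenvalues per orbit. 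For the reverse inclusion, given an $\alpha$-th root of unity $\mu\notin\Gamma$, I would pose $(C_\varphi-\mu I)f=g$ as the $\alpha\times\alpha$ system~(\ref{spec-sys}) and verify that its coefficient determinant is nonzero, recovering $f$ as a finite Bloch combination of the shifts $g\circ\varphi^{j-1}$.

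The main obstacle will be the determinant computation together with identifying its zero set as $\Gamma$: unlike in Proposition~\ref{polyspec}, the coefficient matrix is not a single companion matrix but decomposes block-diagonally along the $\tau$-cycles, each block contributing a factor that couples the cycle length with the orbit product $\prod_{i\in O}\lambda_i$. Showing that the union of these zero sets equals $\Gamma$ is the technical heart of the proof.
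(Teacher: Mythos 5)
Your part (a) and the containment $\sigma(C_\varphi)\subseteq\partial\BigD$ in part (b) are fine, and the paper itself offers nothing more here than the assertion that the arguments of Propositions~\ref{polyspec} and~\ref{perm} ``carry over,'' so you are not missing a hidden argument. The genuine gap is in your eigenvalue production and in the step you defer as the ``technical heart.'' A monomial $\prod_j z_j^{k_j}$ with exponents constant on $\tau$-orbits has eigenvalue $\prod_{O}\bigl(\prod_{i\in O}\lambda_i\bigr)^{k_O}$, so this construction realizes only the subgroup generated by the orbit products $\omega_O=\prod_{i\in O}\lambda_i$, which is in general a proper subgroup of $G=\langle\lambda_1,\dots,\lambda_n\rangle$; your own per-cycle linear computation, which yields eigenvalues $\mu$ with $\mu^{p}=\omega_O$, points at the same discrepancy. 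This cannot be repaired, because the identity you would need --- that the union of the zero sets of the block determinants is all of $\Gamma$ --- is false, and indeed the statement being proved is false. Take $n=2$, $\tau=(1\,2)$, $\lambda_1=i$, $\lambda_2=-i$, so $\varphi(z_1,z_2)=(iz_2,-iz_1)$. Then $\varphi\circ\varphi=\mathrm{id}$, hence $C_\varphi^2=I$ and $\sigma(C_\varphi)\subseteq\{1,-1\}$ (in fact equality holds, with eigenfunctions $1$ and $z_1-iz_2$), whereas $G=\Gamma=\{1,i,-1,-i\}$. The same mechanism kills (b)(i): for $\varphi(z_1,z_2)=(e^{i\theta}z_2,e^{-i\theta}z_1)$ with $\theta/2\pi$ irrational, both $\lambda_j$ have infinite order, yet $\varphi^2=\mathrm{id}$ and $\sigma(C_\varphi)\subseteq\{1,-1\}\neq\partial\BigD$.

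The root cause is that the order of $\varphi$ need not equal the least common multiple $\alpha$ of the orders of the $\lambda_j$ and of $\tau$; it is governed by the cycle lengths $p$ together with the orders of the orbit products $\omega_O$, and can be strictly smaller than $\alpha$. When it is, the system~(\ref{spec-sys}) can be formed with the true order of $\varphi$ in place of $\alpha$, and its nonvanishing determinant excludes from the spectrum certain $\alpha$-th roots of unity lying in $\Gamma$. So the difficulty you isolated is an obstruction to the theorem, not merely to your proof: the correct answer must be expressed in terms of the pairs $(p,\omega_O)$. Your monomial and linear eigenfunctions, taken together with their products, generate exactly the group of $p$-th roots of the $\omega_O$ over all orbits, and this is the full spectrum; your argument does recover the paper's claims in the two cases the paper actually proves, namely $\tau=\mathrm{id}$ (Proposition~\ref{polyspec}), where $\omega_{\{j\}}=\lambda_j$, and $\lambda_j\equiv 1$ (Proposition~\ref{perm}), where $\omega_O=1$.
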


We are not aware of any non-onto symbols of isometric composition operators on $\BigD^n$. Thus, we propose the following

\begin{conjecture}Let $\f\in H(\BigD^n)$ be the symbol of an isometric composition operator on $\cal{B}$.
\begin{enumerate}
\item[{\normalfont{(a)}}] If $\varphi \not\in \Aut(\BigD^n)$, then
$\sigma(C_\varphi)=\overline{\BigD}$.

\item[{\normalfont{(b)}}] If $\varphi \in \Aut(\BigD^n)$, so that
$\varphi(z) =
(\lambda_1z_{\tau(1)},\dots,\lambda_nz_{\tau(n)})$ where
$\modu{\lambda_j} = 1$ for $j=1,\dots,n$ and $\tau \in S_n$, then
$$\sigma(C_\varphi) =
\begin{cases}
\partial\BigD,& \text{if } \mathrm{ord}(\lambda_j) = \infty \text{ for
some } $j$,\\
\G,& \text{if } \mathrm{ord}(\lambda_j)<\infty
\text{ for all } $j$,
\end{cases}$$
where $\G$ is the finite cyclic group generated
by $\lambda_1,\dots,\lambda_n$ and by the $m^{\rm{th}}$ roots of unity, with $m=\mathrm{ord}(\tau)$.
\end{enumerate}
\end{conjecture}

From Theorem~\ref{spec-poly} and Theorem \ref{cmain}, which gives a
complete characterization of the symbols of the isometric
composition operators on the Bloch space of the unit disk, we deduce
a the classification of the spectrum in the one-dimensional
case found in \cite[Theorem 5.1]{AC}.

\begin{theorem} Let $\varphi$ be the symbol of an isometric
composition operator on the Bloch space of the unit disk.
\begin{enumerate}
\item[{\normalfont{(a)}}] If $\varphi$ is not a rotation, then $\sigma(C_\varphi) =
\overline{\BigD}$.

\item[{\normalfont{(b)}}] If $\varphi(z) = \lambda z$ with $\modu{\lambda} = 1$, then
$$\sigma(C_\varphi) =
\begin{cases}
\partial\BigD,& \text{if } \mathrm{ord}(\lambda) = \infty,\\
G,& \text{if } \mathrm{ord}(\lambda)<\infty,
\end{cases}$$ where $G$ is the cyclic
group generated by $\lambda$.
\end{enumerate}
\end{theorem}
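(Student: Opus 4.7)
The plan is to reduce the theorem to two results already in hand: Theorem~\ref{cmain}, which lists the equivalent characterizations of isometric composition operator symbols on $\cal{B}$ of the disk, and Theorem~\ref{spec-poly}, which gives the spectrum in the polydisk case. Together with Lemma~\ref{conway_lemma} and a very small strengthening of Proposition~\ref{non-auto_onto}, these will yield the classification.

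For part (b), I would simply apply Theorem~\ref{spec-poly}(b) with $n=1$ and $\t$ the identity permutation of $S_1$. Then $\mathrm{ord}(\t)=1$, so the ``cyclic group generated by $\l_1,\dots,\l_n$ and by the $m$th roots of unity'' collapses to the cyclic group $G$ generated by $\l$, and the two cases ($\mathrm{ord}(\l)=\infty$ versus $\mathrm{ord}(\l)<\infty$) correspond exactly to the statement to be proved.

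For part (a), the starting observation is that the only automorphisms of $\BigD$ fixing the origin are rotations. Since Theorem~\ref{cmain} forces $\f(0)=0$ for every isometric symbol, the hypothesis ``$\f$ is not a rotation'' is equivalent to ``$\f\notin\Aut(\BigD)$''. Invoking condition (e) of Theorem~\ref{cmain}, the zero set of $\f$ is then an infinite sequence in $\BigD$; in particular, $\f$ has at least two distinct zeros, so $\f$ fails to be injective. From here I would recycle the argument of Proposition~\ref{non-auto_onto}: pick distinct $\z,\h\in\BigD$ with $\f(\z)=\f(\h)$ (for example two zeros of $\f$), take the bounded (hence Bloch) function $g(z)=z-\h$, which satisfies $g(\z)\ne 0=g(\h)$, and observe that any Bloch $f$ with $f\circ\f=g$ would produce the contradiction $g(\z)=f(\f(\z))=f(\f(\h))=g(\h)=0$. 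Hence $C_\f$ is not surjective, so $0\in\s(C_\f)$, and Lemma~\ref{conway_lemma} forces $\s(C_\f)=\overline{\BigD}$.

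The proof is essentially bookkeeping once one recognizes where each ingredient comes in. The only mildly nontrivial point---what I would regard as the ``hard part''---is noticing that Proposition~\ref{non-auto_onto} was stated with a surjectivity hypothesis but that its proof truly uses only the non-injectivity of $\f$; this lets us bypass the question of whether every non-rotation isometric symbol on $\BigD$ must be onto, which would require a separate (and here unnecessary) argument based on the factorization in Theorem~\ref{cmain}(f).
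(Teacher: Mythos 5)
Your proof is correct. Part (b) is exactly the paper's argument: specialize Theorem~\ref{spec-poly}(b) to $n=1$ with $\tau$ the identity permutation, so the group collapses to the cyclic group generated by $\lambda$. For part (a) you take a slightly different route from the paper, whose proof is a one-liner: by Theorem~\ref{cmain}(c), every isometric symbol on $\BigD$ is onto (either it is an automorphism, or every $a\in\BigD$ is attained along a sequence tending to the boundary), so the surjectivity hypothesis of Theorem~\ref{spec-poly}(a) (equivalently Proposition~\ref{non-auto_onto}) is satisfied verbatim and no re-examination of that proposition is needed. You instead extract non-injectivity of $\f$ from Theorem~\ref{cmain}(e) and re-run the point-evaluation argument of Proposition~\ref{non-auto_onto}; this is valid, and your observation that the proof of that proposition uses surjectivity only to deduce that an onto non-automorphism cannot be one-to-one --- so that non-injectivity alone suffices to show $C_\f$ is not surjective --- is accurate. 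What your route buys is independence from the question of whether the symbol is onto; what it costs is essentially nothing, although in the disk that question is already settled affirmatively by condition (c), which is precisely the shortcut the paper takes. Either way, $0\in\sigma(C_\f)$ together with Lemma~\ref{conway_lemma} (an isometry that is not invertible has spectrum $\overline{\BigD}$) finishes the proof.
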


\begin{proof} The proof is the direct application of Theorem \ref{spec-poly} to
the case of the unit disk, since in this case the symbols of the
isometric composition operators are onto (see part (c) of
Theorem~\ref{cmain}).
\end{proof}

\section{Open Questions}

We conclude the paper with the following open questions.
\begin{enumerate}
\item[(1)] In the case when $D$ is the unit disk or the unit ball, it is
well known (see \cite{Z} (Theorem~5.1.7) and \cite{Z2}
(Theorem~3.9)) that for any $z,w\in D$, $$\r(z,w)=\sup\{|f(z)-f(w)|:
f\in\cal{B}, \b_f\le 1\}.$$ On the other hand, by the
characterization of the semi-norm as a Lipschitz number, it follows
immediately that for any bounded homogeneous domain $D$ and $z,w\in
D$,  $$\sup\{|f(z)-f(w)|: f\in\cal{B}, \b_f\le 1\}\le \r(z,w).$$ For
which domains $D$ does the opposite inequality hold?\\
\\
If $D$ is such
a domain and $\f\in H(D)$, we obtain a sharper lower estimate of the norm of $C_\f$. Indeed, using a normal family argument, we
can find $f\in \cal{B}$ such that $\b_f=1$ and
$\r(\f(0),0)=|f(\f(0))-f(0)|$. Letting $g(z)=f(z)-f(0)$, we obtain a
Bloch function $g$ fixing 0 with norm 1 and such that
$\r(\f(0),0)=|g(\f(0))|$. Thus $\|C_\f(g)\|_{\cal{B}}\ge
|g(\f(0)|=\r(\f(0),0).$ Since as observed in Theorem~\ref{bddcomp},
$\|C_\f\|\ge 1$, we obtain $\|C_\f\|\ge \max\{1, \r(\f(0),0)\}$. In
the case of the unit ball this is the lower estimate in
Corollary~\ref{lowest}.\\
\\
    At the end of Section 5, we saw that if $D$ is a bounded homogeneous domain having $D$ as a factor, then it is possible to construct non-trivial examples of symbols of isometric composition operators.

\item[(2)] Are there any non-trivial examples of isometric composition
operators on the Bloch space for domains $D$ which do not contain the disk as a factor?
In particular, are there any symbols of isometric composition operators on the Bloch space for the unit ball
that are not unitary transformations?\\
\\
    In the case of the polydisk, although there is a large class of non-trivial isometries $C_\f$ on $\cal{B}$, we haven't been able to find any symbols $\f$ such that $B_\f>1$.

\item[(3)] Are there any symbols $\f$ for which $C_\f$ is an isometry on
$\cal{B}$ and $B_\f>1$ if $D$ is not the unit ball?

\item[(4)] If $\f$ is a holomorphic self-map of a bounded homogeneous domain $D$ in $\C^n$ with $n>1$ and $C_\f$ is an isometry on the Bloch space of $D$, does there exist a
sequence $\{S_k\}$ in $\hbox{Aut}(D)$ such that $\{\f\circ S_k\}$
converges locally uniformly to the identity in $D$?\\
\\
    The answer in (4) is affirmative if $D=\BigD$ by Theorem~\ref{cmain}.
The following is a weaker form of a positive answer in question (4)
for the case of the polydisk (see Theorem~8 in \cite{CC3}).

\begin{theorem}\label{polyt} Let $\f\in H(\BigD^n)$ be such that $C_\f$ is an isometry on $\cal{B}$. Then there exist sequences $\{T_m^j\}_{m\in\BigN}$ $(j=1,\dots,n) $ of automorphisms of $\BigD^n$ such that $\{(\f_1\circ T_m^1,\dots,\f_n\circ T_m^n)\}$ converges uniformly on compact subsets to the identity in $\BigD^n$.
\end{theorem}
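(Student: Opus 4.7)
The plan is to treat each component $\f_j$ separately and to construct $\{T_m^j\}_{m\in\BigN}$ so that $\f_j\circ T_m^j\to z_j$ locally uniformly on $\BigD^n$; assembled componentwise, this delivers the stated convergence of $(\f_1\circ T_m^1,\dots,\f_n\circ T_m^n)$ to the identity. The starting point is Theorem~\ref{neccond}: because $C_\f$ is an isometry on $\cal{B}$ and every factor of $\BigD^n$ has Bloch constant $c_{\BigD}=1$, parts (b) and (c) of that theorem give $\f(0)=0$ and $\b_{\f_j}=1$ for each $j$. Fix $j$, pick $w^{(m,j)}\in\BigD^n$ with $Q_{\f_j}(w^{(m,j)})\to 1$, and let $T_m^j\in\hbox{Aut}(\BigD^n)$ be a product of one-variable disk automorphisms sending $0$ to $w^{(m,j)}$. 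Putting $g_m=\f_j\circ T_m^j$, the invariance of the Bergman metric under automorphisms gives $Q_{g_m}(0)=Q_{\f_j}(w^{(m,j)})\to 1$, while $\b_{g_m}=\b_{\f_j}=1$. Since $\{g_m\}$ is bounded by $1$, Montel's theorem provides a subsequence with $g_m\to h_j$ locally uniformly, $h_j\in H(\BigD^n,\overline{\BigD})$; Cauchy estimates force $\nabla g_m(0)\to\nabla h_j(0)$, hence $Q_{h_j}(0)=\|\nabla h_j(0)\|_{\ell^2}=1$, while Theorem~\ref{semicont} yields $\b_{h_j}\le 1$. In particular $h_j$ is non-constant, and the maximum modulus principle places $h_j(\BigD^n)$ in $\BigD$.

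The crux is to establish the rigidity statement $h_j(z)=\l_j z_{k(j)}$ for some $k(j)\in\{1,\dots,n\}$ and $|\l_j|=1$. First, $h_j(0)=0$: if $h_j(0)=c\ne 0$, post-composing with the disk Möbius sending $c$ to $0$ would produce a map $\BigD^n\to\BigD$ with $Q$-value at $0$ equal to $(1-|c|^2)^{-1}>1$, contradicting that the Bloch constant of $\BigD^n$ equals $1$. Next, applying the one-variable Schwarz lemma to the slice $\zeta\mapsto h_j(\zeta v/\|v\|_\infty):\BigD\to\BigD$ gives $|\nabla h_j(0)\cdot v|\le\|v\|_\infty$ for every $v\ne 0$; testing this against the vector $v_k=\overline{\partial_k h_j(0)}$ and using $\|\nabla h_j(0)\|_{\ell^2}=1$ forces $\|\nabla h_j(0)\|_{\ell^\infty}=1$, which for a unit $\ell^2$-vector is possible only when $\nabla h_j(0)=\l_j e_{k(j)}$ with $|\l_j|=1$. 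Finally, for every $z$ with $|z_{k(j)}|=\|z\|_\infty$ the derivative at $0$ of $\zeta\mapsto h_j(\zeta z/\|z\|_\infty)$ has modulus one, so the Schwarz equality case gives $h_j(z)=\l_j z_{k(j)}$; since this identity holds on the nonempty open set $\{z:|z_{k(j)}|>|z_i|,\ i\ne k(j)\}$, the identity theorem extends it to all of $\BigD^n$.

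To finish, replace $T_m^j$ by $T_m^j\circ U_j$, where $U_j\in\hbox{Aut}(\BigD^n)$ is the automorphism that interchanges the $j$-th and $k(j)$-th coordinates and multiplies the $j$-th coordinate by $\overline{\l_j}$; then $\f_j\circ T_m^j\circ U_j\to\l_j\overline{\l_j}z_j=z_j$ locally uniformly, and running the construction independently for each $j$ produces the required $n$ sequences. The main obstacle is the rigidity step: bridging the purely infinitesimal information $Q_{h_j}(0)=1$ to a global identification of $h_j$ as a single coordinate function relies on the coincidence of the $\ell^2$ and $\ell^\infty$ norms for the extremal gradient together with Schwarz equality propagated through the identity theorem, a chain of reasoning tied to the product geometry of $\BigD^n$; this in turn suggests why the analogous statement in question~(4) for general bounded symmetric domains is more delicate.
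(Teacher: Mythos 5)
The paper itself gives no proof of this statement: it is quoted as Theorem~8 of \cite{CC3}, so there is no in-text argument to measure yours against. Judged on its own, your proof is correct and self-contained, and it is built entirely from tools the paper does establish: Theorem~\ref{neccond} gives $\f(0)=0$ and $\b_{\f_j}=c_{\BigD}=1$; choosing $w^{(m,j)}$ with $Q_{\f_j}(w^{(m,j)})\to 1$ and automorphisms $T_m^j$ with $T_m^j(0)=w^{(m,j)}$, the invariance $Q_{\f_j\circ T_m^j}(0)=Q_{\f_j}(w^{(m,j)})$ plus Montel and Cauchy estimates produce a limit $h_j\in H(\BigD^n,\BigD)$ with $\|\nabla h_j(0)\|_{\ell^2}=Q_{h_j}(0)=1$. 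Your rigidity chain is sound at every link: $h_j(0)=0$ because otherwise post-composition with a disk automorphism would violate $c_{\BigD^n}=1$; the slice Schwarz lemma gives $|\nabla h_j(0)\cdot v|\le \|v\|_{\infty}$, and testing against the conjugate gradient forces $\|\nabla h_j(0)\|_{\infty}=\|\nabla h_j(0)\|_{\ell^2}=1$, which for an $\ell^2$-unit vector pins $\nabla h_j(0)=\l_j e_{k(j)}$; Schwarz equality on the slices through the open cone $\{|z_{k(j)}|>|z_i|,\ i\ne k(j)\}$ plus the identity theorem then yields $h_j(z)=\l_j z_{k(j)}$ globally. Passing to subsequences and post-composing with a fixed permutation-rotation automorphism is harmless since the theorem permits a separate sequence of automorphisms for each component. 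The only point I would ask you to state more carefully in a final write-up is the definition of $U_j$ (you want the $k(j)$-th coordinate of $U_j(z)$ to equal $\overline{\l_j}z_j$, so that $h_j\circ U_j(z)=z_j$); as phrased, the order of the swap and the rotation is ambiguous, but the required automorphism clearly exists, so this is cosmetic rather than a gap.
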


    In \cite{CC3}, the following simple example was given to show that the converse to Theorem~\ref{polyt} is false. Let $\f(z_1,z_2)=(z_1,z_1)$ for $|z_j|<1$, $j=1,2$.  Since the components of $\f$ are equal, $C_\f$ is not an isometry. On the other hand, $(\f_1(z_1,z_2),\f_2(T(z_1,z_2)))=(z_1,z_2)$, where $T(z_1,z_2)=(z_2,z_1)$.\\
\\
    In \cite{CC3}, the question on whether the components of the symbols $\f$ of the isometric composition operators for the polydisk $\f\in H(\BigD^n)$ must depend on a single distinct complex variable was raised.

\item[(5)] Are there any other holomorphic maps $\varphi$ which induce an
isometric composition operator on the Bloch space of the polydisk
besides those whose components are defined in terms of the
isometries on the Bloch space on $\BigD$?\\
\\
A negative answer to this question and Theorem~\ref{polyt} would yield a positive answer to question (4) for the case of the polydisk.

\item[(6)] Are there any isometric composition operators on the Bloch space
of the polydisk whose symbols are not onto?

\begin{definition}\label{algcon} \cite{CM} A functional Banach space $\cal{Y}$ on a set $X$ is called {\bf algebraically consistent} if the only non-zero bounded linear functionals $k$ on $\cal{Y}$ such that $k(fg)=k(f)k(g)$ whenever $f,g,fg\in \cal{Y}$ are point evaluations (i.e. functionals of the form $K_x(f)=f(x)$ for each $f\in \cal{Y}$, where $x\in X$).\end{definition}

    In Theorem~1.6 of \cite{CM} it is shown that if $\cal{Y}$ is an algebraically consistent functional Banach space consisting of analytic functions on a set $X$ whose interior is connected and dense in $X$ and such that the elements of $\cal{Y}$ are analytic in the interior of $X$ and continuous on $X$, and if $\f$ is an analytic self-map of the interior of $X$ such that $C_\f$ is bounded and invertible on $\cal{Y}$, then $\f$ is one-to-one and onto and $C_\f^{-1}=C_{\f^{-1}}$.

\item[(7)] Is the Bloch space of the polydisk algebraically consistent?\\
\\
In the case of an affirmative answer, it would follow
that if $C_\f$ is an isometry on the Bloch space of the polydisk with $\varphi \not\in \Aut(\BigD^n)$, then $C_\f$ is not invertible (so that $\sigma(C_\varphi) =
\overline{\BigD}$), without assuming that $\varphi$ is onto.
\end{enumerate}

\bibliographystyle{amsplain}
\bibliography{references.bib}
\end{document}